\newcommand{\bburl}[1]{\textcolor{blue}{\url{#1}}}
\newcommand{\fix}[1]{\textcolor{red}{\textbf{\large (#1)\normalsize}}}
\numberwithin{equation}{section}
\newtheorem{thm}{Theorem}[section]
\newtheorem{cor}[thm]{Corollary}
\newtheorem{lem}[thm]{Lemma}
\theoremstyle{plain}
\newtheorem{definition}[thm]{Definition}
\newtheorem*{theorem*}{Theorem}
\newtheorem{remark}[thm]{Remark}
\newcommand\be{\begin{equation}}
\newcommand\ee{\end{equation}}
\newcommand\bea{\begin{eqnarray}}
\newcommand\eea{\end{eqnarray}}
\newcommand\bi{\begin{itemize}}
\newcommand\ei{\end{itemize}}
\newcommand\ben{\begin{enumerate}}
\newcommand\een{\end{enumerate}}
\newcommand\bc{\begin{center}}
\newcommand\ec{\end{center}}
\newcommand\ba{\begin{array}}
\newcommand\ea{\end{array}}
\newcommand{\R}{\ensuremath{\mathbb{R}}}
\newcommand{\C}{\ensuremath{\mathbb{C}}}
\newcommand{\Z}{\ensuremath{\mathbb{Z}}}
\newcommand{\N}{\mathbb{N}}
\newcommand\frakfamily{\usefont{U}{yfrak}{m}{n}}
\DeclareTextFontCommand{\textfrak}{\frakfamily}
\newtheorem{rek}[thm]{Remark}
\newcommand{\hr}[1]{\href{#1}{\url{#1}}}
\newcommand\erf{\operatorname{erf}}
\newcommand\stab{\operatorname{stab}}
\newcommand\fs{\mathfrak{s}}
\newcommand\fn{\mathfrak{n}}
\newcommand\fg{\mathfrak{g}}
\newcommand\fl{\mathfrak{l}}
\newcommand\fd{\mathfrak{d}}
\newcommand\fu{\mathfrak{u}}
\newcommand\fh{\mathfrak{h}}
\newcommand\ad{\operatorname{ad}}
\newcommand\Ad{\operatorname{Ad}}
\newcommand\spans{\operatorname{sp}}
\title{Leading Digit Laws on Linear Lie Groups} 
\author{Corey Manack}
\email{\textcolor{blue}{\href{mailto:cmanack@fandm.edu}{cmanack@fandm.edu}}}
\address{Department of Mathematics, Franklin \& Marshall, Lancaster, PA 17604}
\author{Steven J. Miller}
\email{\textcolor{blue}{\href{mailto:sjm1@williams.edu, Steven.Miller.MC.96@aya.yale.edu}{sjm1@williams.edu, Steven.Miller.MC.96@aya.yale.edu}}}
\address{Department of Mathematics and Statistics, Williams College, Williamstown, MA 01267}
\subjclass[2010]{	11K06, 60F99  (primary), 28C10, 15B52, 	15B99    (secondary).} 
\keywords{Benford's law, digit laws, Haar measure, matrix groups}
\thanks{The second named author was supported by NSF Grant DMS1265673.}
\date{\today}
\begin{document}

\maketitle

\begin{abstract} We determine the leading digit laws for the matrix components of a linear Lie group $G$. These laws generalize the observations that the normalized Haar measure of the Lie group $\R^+$ is $dx/x$ and that the scale invariance of $dx/x$ implies the distribution of the digits follow Benford's law, which is the probability of observing a significand base $B$ of at most $s$ is $\log_B(s)$; thus the first digit is $d$ with probability $\log_B(1 + 1/d)$). Viewing this scale invariance as left invariance of Haar measure, we determine the power laws in significands from one matrix component of various such $G$. We also determine the leading digit distribution of a fixed number of components of a unit sphere, and find periodic behavior when the dimension of the sphere tends to infinity in a certain progression.
\end{abstract}

\section{Introduction}

\subsection{Background}

Given a positive number $x$ and a base $B > 1$,  we may write $x = S_B(x) B^{k(x)}$, where $S_B(x) \in [1, B)$ is the significand and $k(x) \in \Z$. The distribution of $S_B(x)$ has interested researchers in a variety of fields for over a hundred years, as frequently it is not uniformly distributed over $[1, B)$ but exhibits a profound bias. If ${\rm Prob}(S_B(x) \le s) = \log_B(s)$ we say the system follows Benford's law, which immediately implies the probability of a first digit of $d$ is $\log_B(d+1) - \log_B(d) = \log_B(1 + 1/d)$ (at least if $d+1 \le B$); in particular, base 10 has a first digit of 1 about 30\% of the time, and 9 for only around 4.5\% of the values. This bias was first observed by Newcomb \cite{New} in the 1880s, and then rediscovered by Benford \cite{Ben} nearly 50 years later.

Many systems follow Benford's law; on the pure math side these include the Fibonacci numbers (and most solutions to linear recurrence relations) \cite{BrDu}, iterates of the 3x+1 map \cite{KonMi,LagSo}, and values of $L$-functions on the critical strip among many others; on the applied side examples range from voter and financial data \cite{Meb,Nig} to the average error in floating point calculations \cite{Knu}. See \cite{BH3, Mil} for two recent books on the subject, the latter describing many of the applications from detecting fraud in taxes, images, voting  and scientific research, \cite{BH2,Dia,Hi1,Hi2,Pin,Rai} for some classic papers espousing the theory, and \cite{BH1,Hu} for online collections of articles on the subject.

Our purpose is to explore the distribution of leading digits of components chosen from some random process. We concentrate on two related systems. The first are various $n \times n$ matrix ensembles, which of course can be viewed as vectors living in $\R^{n^2}$. The second are components of a point uniformly chosen on a unit sphere, which turn out to imply results for some of our matrix ensembles. 
 
Following the work of Montgomery \cite{Mon}, Odlyzko \cite{Od1,Od2}, Katz-Sarnak \cite{KaSa1,KaSa2}, Keating-Snaith \cite{KeSn1,KeSn2,KeSn3}, Conrey-Farmer-Keating-Rubinstein-Snaith \cite{CFKRS} and many others, random matrix ensembles in general, and the classical compact groups in particular, have been shown to successfully model a variety of number theory objects, from special values to distribution of zeros to moments. In some number theory systems Benford's law has already been observed (such as values of $L$-functions in \cite{KonMi}, or values of Fourier coefficients in \cite{ARS}); thus our work can be interpreted as providing another explanation for the prevalence of Benford's law.

We first quickly review some needed background material and then state our results.



\subsection{Haar Measure Review}

Random matrix theory has enjoyed numerous successes over the past few decades, successfully modeling a variety of systems from energy levels of heavy nuclei to zeros of $L$-functions \cite{BFMT-B, FM, Ha}. Early work in the subject considered ensembles where the matrix element were drawn independently from a fixed probability distribution $p$; this of course led to questions and conjectures on how various statistics (such as spacings between normalized eigenvalues) depended on $p$. For example, while  the density of normalized eigenvalues in matrix ensembles (Wigner's semi-circle law) was known for all ensembles where the entries were chosen independently from nice distributions, the universality of the spacings between adjacent normalized eigenvalues resisted proof until this century (see, among others, \cite{ERSY, ESY, TV1, TV2}).

Instead of choosing the matrix elements independently and having to choose a $p$, we can consider matrix groups where the Haar measure gives us a canonical choice for randomly choosing a matrix element.\footnote{These are the ensembles that turn out to be most useful in number theory, not the ones arising from a fixed distribution.} On an $n$-dimensional Lie group $G$ there exists a unique, non-trivial countably additive measure $\mu$ which is left translation invariant (so $\mu(gE) = \mu(E)$ for all $g \in G$ and $E$ a Borel set); $\mu$ is called the Haar measure. If our space is compact we may normalize $\mu$ so that it assigns a measure of 1 to $G$ and thus may be interpreted as a probability. See \cite{HR} for more details on the Haar measures and Lie groups.

We are especially interested in the case where $G\subset {\rm GL}(V)$ is a connected linear Lie group; we take $p_{i,j}$ to be the projection of $G$ onto the $i,j$-th coordinate and study the distribution of the leading digits. For many $G$ the resulting behavior is easily determined, and follows immediately from the observation that a system whose density is $\frac{1}{\log B} \frac{1}{x}$ on $[1, B)$ follows Benford's law (see definition \ref{def:diglaw}) and Theorem \ref{thm:1/xBenford}). After introducing some terminology, we state five cases which are immediately analyzed from the Haar density; Theorem \ref{diagdet1} plays a key role in our later work (Theorem \ref{SLnBenford}). These theorems are interpretations of Haar measure decompositions of classical noncompact $G$ (see \cite{HR}). Care must be taken to separate the notion of digit law for the compact and noncompact cases since many noncompact $G$ do not posses a $G$ invariant probability measure. So we have two definitions of leading digit law: for noncompact $G$, we average the measure of significands over a neighborhood of a specific one-parameter subgroup (see Definition \ref{def:diglawnoncpt} for a precise statement).  If $G$ is compact, the Haar measure affords a global average over all matrix elements. So one may think of the noncompact digit law as a local, and the compact digit law as global (see Definition \ref{def:diglawcpt}).

\begin{definition}
\label{def:diglaw}
Given a base $B\in\N$, a \emph{digit law} is a probability density function $\psi: [1,B)\to [0,1].$ A digit law satisfies a $(B,k)$ power law (for $k > 0$) if \be \psi(x)\ =\ \psi_k(x) \ := \  \frac{B^{k-1}-1}{(k-1)B^{k-1}} \frac{1}{x^k},\ee and is $B$-Benford if \be \psi(x) \ =\  \psi_1(x) \ := \  \frac{1}{\log B} \frac{1}{x}.\ee
\end{definition}


Notice that $\lim_{k\to 1} \psi_k(x) = \psi_1(x)$.

\begin{definition}
\label{def:diglawnoncpt}
Given a connected, noncompact, locally compact Lie group $G$ with Lie algebra $L(G)$, a unit direction $X\in L(G)$ which generates a one parameter subgroup $x=x(t)=\exp(tX)$ of $G$, a base $B>0$, a positive measure $\mu$ on $G$, and probability function $\psi:[1,B)\to [0,1]$ we say that $(G,d\mu,x)$ satisfies the digit law $\psi$ if the following holds: If we let
\be U_{\epsilon}(X)\ = \ \{Y+X\in L(G)\ |\ Y\perp X, |Y|<\epsilon\}\ee
be the disk of radius $\epsilon$ containing $X$ that is orthogonal to $X$ in $L(G)$, we have
\begin{equation}
\label{equ:Haardef}
{\rm Prob}(S_B(x) \le s)\ =\ \lim_{k\to\infty}\lim_{\epsilon\to 0}\frac{\sum_{l=0}^{k-1}\mu(\exp(U_{\epsilon}([\log B^l,\log B^ls)X)))}{\mu(\exp(U_{\epsilon}([0,\log B^k)X)))}=\int_0^s\psi(t)\ dt
\end{equation}
(where $k$ is a positive integer).
\end{definition}
\begin{remark}

The above definition, though somewhat involved, captures the essence of leading digit law by averaging $\mu$ in the direction of $X$ according to the significands base $B$. Since we are, in many cases, averaging the Haar measure in a specific direction, we find digit laws in components of matrix groups which are not amenable (${\rm SL}_2(\R)$, e.g.)

\end{remark}
By the Baker-Campbell-Hausdorff formula, the averaging condition \eqref{equ:Haardef} is equivalent to
\begin{equation}
\label{equ:Haardef2}
{\rm Prob}(S_B(x) \le s)\ =\ \lim_{k\to\infty}\lim_{\epsilon\to 0}\frac{\sum_{l=0}^{k-1}\mu(\exp(\log B^lX)\exp(U_{\epsilon}([0,\log s)X)))}{\mu(\exp(U_{\epsilon}([0,k\log B)X)))}.
\end{equation}

We typically take $\mu$ to be the left or right invariant Haar measure on $G$. If $\mu$ is left or bi invariant, \eqref{equ:Haardef2} becomes
\begin{equation}
\label{equ:Haardef3}
{\rm Prob}(S_B(x) \le s)\ =\ \lim_{k\to\infty}\lim_{\epsilon\to 0}\frac{k\mu(\exp(U_{\epsilon}([0,\log s)X)))}{\mu(\exp(U_{\epsilon}([0,k\log B)X)))}.
\end{equation}

\begin{thm}
\label{thm:1/xBenford}
$(\R^+, dx/x)$ is $B$-Benford.
\end{thm}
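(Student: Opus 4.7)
The plan is to reduce the statement to a direct computation inside Definition \ref{def:diglawnoncpt}. Since $\mathbb{R}^+$ under multiplication is an abelian one-dimensional Lie group with Lie algebra $L(\mathbb{R}^+)=\mathbb{R}$ and Lie-group exponential equal to the usual exponential, I take the unit direction $X=1\in L(\mathbb{R}^+)$, whose associated one-parameter subgroup is $x(t)=e^t$. A simple change of variables shows $d\mu = dx/x$ is both left- and right-invariant on $\mathbb{R}^+$, so equation \eqref{equ:Haardef3} is the relevant form of the definition. Because $L(\mathbb{R}^+)$ is one-dimensional, no nonzero vector is perpendicular to $X$; consequently $U_\epsilon(X)=\{X\}$ for every $\epsilon>0$, and the $\epsilon\to 0$ limit may simply be dropped.

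With those identifications in place, I would compute both numerator and denominator of \eqref{equ:Haardef3} directly. For any interval $[a,b)\subset\mathbb{R}$ one has $\exp([a,b)X)=[e^a,e^b)$, and
\begin{equation*}
\mu\bigl([e^a,e^b)\bigr) \ =\ \int_{e^a}^{e^b}\frac{dx}{x}\ =\ b-a .
\end{equation*}
Applied to the two pieces of \eqref{equ:Haardef3}, this gives $\mu(\exp([0,\log s)X))=\log s$ and $\mu(\exp([0,k\log B)X))=k\log B$, so the ratio is
\begin{equation*}
\frac{k\,\log s}{k\,\log B} \ =\ \log_B s ,
\end{equation*}
which is independent of $k$ (so the outer limit is trivial).

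Finally I would match this against the candidate density $\psi_1(t)=\tfrac{1}{\log B}\tfrac{1}{t}$ on $[1,B)$ by the elementary integration
\begin{equation*}
\int_1^s \psi_1(t)\,dt \ =\ \frac{1}{\log B}\int_1^s \frac{dt}{t}\ =\ \log_B s ,
\end{equation*}
completing the verification of the digit law. There is no real obstacle here: once the one-dimensionality of $L(\mathbb{R}^+)$ is exploited to remove the $\epsilon$-limit and once $dx/x$ is recognized as the Haar measure, the computation is tautological. The only thing to be careful about is keeping track of the convention that $\psi$ is supported on $[1,B)$ (so the indicated integral from $0$ is effectively from $1$), which is precisely what makes the answer $\log_B s$ rather than something else. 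This clean computation is also what makes $\mathbb{R}^+$ the prototype for the more elaborate Haar-measure decompositions pursued later in the paper.
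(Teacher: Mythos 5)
Your proof is correct and follows essentially the same route as the paper's: both exploit the one-dimensionality of $L(\R^+)$ to collapse $U_\epsilon$ to a point (eliminating the $\epsilon$-limit), then compute the numerator and denominator of \eqref{equ:Haardef3} directly from $d\mu = dx/x$ to obtain the ratio $\log_B s$. Your version is slightly more explicit in spelling out the invariance of $dx/x$ and the final check that $\int_1^s \psi_1(t)\,dt = \log_B s$, but these are details the paper leaves implicit rather than a different argument.
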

\begin{proof}
As the Lie algebra $L(R^+)=\R$ of $R^+$ is one dimensional, the perpendicular subspace to $\R$ is $\{0\}$, Thus for any $s\in [1,B)$, one has $U_{\epsilon}([0,\log s)X)=[0,\log s)X$, whence \eqref{equ:Haardef3} becomes
\be {\rm Prob}(S_B(X) \le s)\ =\ \lim_{k\to\infty}\frac{k\int_1^s dx/x}{\int_1^{B^k} dx/x}\ = \ \frac{k\log s}{k\log B}\ =\ \log_B s.\ee
\end{proof}
In the spirit of Theorem \ref{thm:1/xBenford}, when the Haar density decomposes as a product of densities on the matrix components, as it does in the next three theorems, the digit laws are easily determined from formulation \eqref{equ:Haardef3}.
\begin{thm}
\label{thm:trimatrices1}
Let $G=P$ be the group of real-valued upper triangular matrices:
\be P \ = \  \left\{
\begin{bmatrix}
a_{11} &  a_{12}  & \ldots & a_{1n}\\
0  &  a_{22} & \ldots & a_{2n}\\
\vdots & \vdots & \ddots & \vdots\\
0  &   0       &\ldots & a_{nn}
\end{bmatrix}, a_{ii}\in\R/\{0\} \right\}.\ee

The leading digit law of $A_{ii}$ for the left invariant Haar density $dg_L$ is

\begin{itemize}

\item $B$-Benford when $i=j=1$,

\item a $(B,k)$ power law when $i=j=k$, $2\leq k \leq n$,

\item uniform for $1<i<j\leq n$.

\end{itemize}

The leading digit law of $a_{ii}$ for the right invariant Haar density $dg_R$ is

\begin{itemize}

\item $B$-Benford when $i=j=n$,

\item a $(B,n-k)$ power law when $i=j=k$, $2\leq k \leq n$,

\item uniform for $1<i<j\leq n$.

\end{itemize}

\end{thm}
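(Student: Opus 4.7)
Proof Plan. The plan is to write down the Haar measures on $P$ explicitly in the standard upper-triangular coordinates, show they factor into one-dimensional marginals along each matrix component, and then extract the digit law for each via equation \eqref{equ:Haardef3}, in the spirit of the proof of Theorem \ref{thm:1/xBenford}.

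First I would derive $d\mu_L$ from the Jacobian of left translation. For $g,h \in P$, the map $h\mapsto gh$ acts column-by-column: column $j$ of $gh$ equals the top-left $j\times j$ block of $g$---upper triangular with diagonal $g_{11},\dots,g_{jj}$ and determinant $g_{11}\cdots g_{jj}$---applied to column $j$ of $h$. Multiplying over $j=1,\dots,n$ gives total Jacobian $\prod_{i=1}^n g_{ii}^{\,n-i+1}$, and left invariance forces
$$d\mu_L \;=\; c_L \, \prod_{i\le j} dg_{ij}\,\Big/\,\prod_{i=1}^{n} g_{ii}^{\,n-i+1};$$
a symmetric row-by-row argument yields $d\mu_R = c_R \prod dg_{ij}/\prod g_{ii}^{\,i}$.

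For the diagonal case $X = E_{kk}$, I would parametrize $U_\epsilon([0,\log s)X)$ by $(t, Y)$ with $Y$ in the orthogonal complement of $\mathbb{R}X$ in $L(G)$, and push forward through $\exp$ using the derivative formula $d\exp|_{tE_{kk}}(E_{ij}) = \int_0^1 e^{(1-u)tE_{kk}} E_{ij}\, e^{utE_{kk}}\,du$. Pairs $(i,j)$ with exactly one of $i,j$ equal to $k$ each contribute a Jacobian factor $(e^t-1)/t$, the $t$-direction contributes $e^t$, and the remaining orthogonal directions contribute unity. Combining these with the weights of $d\mu_L$, the $t$-dependent off-diagonal factors cancel exactly against the $g_{ii}^{\,n-i+1}$ denominators for $i\ne k$ in the $\epsilon\to 0$ limit, leaving on the one-parameter subgroup a one-dimensional density of the form $dg_{kk}/g_{kk}^{\,m}$ with $m$ prescribed by the position $k$. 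The substitution $g_{kk}=e^t$ and the ratio in \eqref{equ:Haardef3} then telescope precisely as in Theorem \ref{thm:1/xBenford} to the cumulative of the $(B,m)$ power law (Benford when $m=1$, which occurs at $k=1$ for $d\mu_L$ and at $k=n$ for $d\mu_R$). For an off-diagonal position $(i,j)$ with $1<i<j$, the one-parameter subgroup $\exp(tE_{ij}) = I + tE_{ij}$ is unipotent, so $g_{ij}=t$ moves linearly and the density factor along this direction is just Lebesgue $dg_{ij}$; \eqref{equ:Haardef3} then reduces to the significand of a uniform distribution on a long interval, which is uniform on $[1,B)$.

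The main obstacle is the bookkeeping in the second step: carefully tracking the $d\exp$ Jacobian contributions and matching them against the $g_{ii}^{\,n-i+1}$ (respectively $g_{ii}^{\,i}$) weights of $d\mu_L$ (resp. $d\mu_R$), so that only the clean one-dimensional marginal on $a_{kk}$ survives as $\epsilon\to 0$. Once this cancellation is verified, the $K\to\infty$ limit in \eqref{equ:Haardef3} is an immediate replay of the argument of Theorem \ref{thm:1/xBenford}, and the mirror symmetry between $d\mu_L$ and $d\mu_R$ accounts for the index switch from $k$ to $n-k$ between the two halves of the statement.
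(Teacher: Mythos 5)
Your overall approach is the paper's: write the Haar densities on $P$ explicitly as product densities in the entries $a_{ij}$, and read off the digit law of $a_{kk}$ from the exponent on $a_{kk}$ that appears. The $d\exp$ bookkeeping you sketch is an attempt to make the passage from the product density to Definition~\ref{def:diglawnoncpt} explicit, which the paper skips; that part is worth doing, though see the caveat at the end.

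The genuine gap is that your stated conclusion contradicts the densities you yourself derive. Your Jacobian computation correctly gives
\[
d\mu_L \;=\; c_L\,\frac{\prod_{i\le j} dg_{ij}}{\prod_{i=1}^{n} g_{ii}^{\,n-i+1}},
\qquad
d\mu_R \;=\; c_R\,\frac{\prod_{i\le j} dg_{ij}}{\prod_{i=1}^{n} g_{ii}^{\,i}}
\]
(as a sanity check, at $n=2$ the first formula restricted to $a_{22}=1$ reproduces $da\,db/a^2$, the standard left Haar measure of the $ax+b$ group). Under $d\mu_L$ the exponent on $g_{kk}$ is $n-k+1$, so $a_{kk}$ obeys a $(B,\,n-k+1)$ power law, which is Benford precisely at $k=n$, \emph{not} $k=1$. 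Your parenthetical ``Benford at $k=1$ for $d\mu_L$ and at $k=n$ for $d\mu_R$'' has this backwards; it matches the theorem statement but does not follow from the formulas you wrote. Note also that the densities recorded in the paper's own proof are the same two expressions with the labels $dg_L$ and $dg_R$ interchanged relative to yours; since your Jacobian computation is the correct one, the theorem statement as printed needs its ``left'' and ``right'' cases swapped (and the exponent $n-k$ in the right-invariant case corrected to $n-k+1$). So you must either state and prove the corrected version, or reconcile your densities with the theorem before claiming it. Finally, the cancellation you invoke in the $d\exp$ step does not occur as described: along $\exp(tE_{kk})$ all diagonal entries $g_{ii}$ with $i\ne k$ equal $1$, so the factors $(e^t-1)/t$ coming from the mixed directions $E_{ik}, E_{kj}$ have nothing in the density to cancel against, and that step needs a different argument.
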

\begin{proof}

The left invariant Haar measure on $P$ has density
\be dg_L \ = \  \frac{1}{a_{11}a_{22}^2\cdots a_{nn}^n}\ \prod_{i<j}\ da_{ij}\ee
and the right invariant Haar measure on $P$ has density
\be dg_R \ = \  \frac{1}{a_{11}^na_{22}^{n-1}\cdots a_{nn}}  \prod_{i<j}\ da_{ij},\ee
where $da_{ij}$ is the Lebesgue density on $\R$ in both cases. All leading digit laws follow.

\end{proof}

\begin{thm}
Let $D$ be the group of real-valued diagonal matrices:
\be D\ = \  \left\{
\begin{bmatrix}
a_{11} &   \ldots & 0\\
\vdots & \ddots & \vdots\\
0        &\ldots & a_{nn}
\end{bmatrix}, a_{ii}\in\R/\{0\} \right\}.\ee
For each $i$ between $1$ and $n$, the leading digit law of $a_{ii}$ with respect to the bi-invariant Haar density $dg$ is $B$-Benford.
\end{thm}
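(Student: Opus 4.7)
The plan is to reduce the statement directly to Theorem \ref{thm:1/xBenford} by exploiting the product structure of the diagonal group. Since $D$ is abelian, its left and right Haar measures coincide, so the bi-invariant density is unambiguous and given by
\[
dg \;=\; \prod_{j=1}^{n}\frac{da_{jj}}{|a_{jj}|},
\]
which is a product of the one-dimensional Haar measures on each copy of $\R^{\times}$. The Lie algebra $L(D)$ is the space of diagonal matrices with Lebesgue inner product inherited from $\R^{n^2}$, and for fixed $i$ the natural one-parameter subgroup to consider is generated by the unit vector $X = E_{ii}$, so that $\exp(tX)$ is the diagonal matrix with $e^t$ in position $(i,i)$ and $1$ elsewhere. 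Under $\exp$, the measure $dg$ pulls back to Lebesgue measure $\prod_j dt_j$ on $L(D)$, because $da_{jj}/|a_{jj}| = dt_j$ when $a_{jj}=e^{t_j}$.

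With $X=E_{ii}$, the orthogonal complement $X^\perp$ inside $L(D)$ is exactly the space of diagonal matrices with a zero in the $(i,i)$-slot, so
\[
U_\epsilon([a,b)X) \;=\; \bigl\{\,Y+tX \,:\, Y \in X^\perp,\ |Y|<\epsilon,\ t\in[a,b)\,\bigr\}
\]
is a tube of length $b-a$ and $(n-1)$-dimensional transversal radius $\epsilon$. Since the pulled-back Haar measure is a product, Fubini gives
\[
\mu\bigl(\exp(U_\epsilon([a,b)X))\bigr) \;=\; (b-a)\cdot V_{n-1}(\epsilon),
\]
where $V_{n-1}(\epsilon)$ is the Euclidean volume of the $(n-1)$-dimensional ball of radius $\epsilon$.

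Plugging this into the left-invariant form \eqref{equ:Haardef3} of the digit-law definition, the transversal factor $V_{n-1}(\epsilon)$ appears identically in the numerator and denominator and cancels, leaving
\[
\mathrm{Prob}(S_B(x)\le s) \;=\; \lim_{k\to\infty}\frac{k\,\log s \cdot V_{n-1}(\epsilon)}{k\log B\cdot V_{n-1}(\epsilon)} \;=\; \frac{\log s}{\log B} \;=\; \log_B s,
\]
so the density on $[1,B)$ is $\psi_1(x) = \tfrac{1}{\log B}\tfrac{1}{x}$ and the law is $B$-Benford, as required. The only nontrivial step is the observation that the pullback of the Haar measure on $D$ under $\exp$ is Lebesgue measure on $L(D)$, which makes the transversal and longitudinal integrals decouple; the rest is a one-line repetition of Theorem \ref{thm:1/xBenford}. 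There is no substantive obstacle here, since $D$ is abelian and the noncommutative issues (asymmetry of left vs.\ right, use of Baker--Campbell--Hausdorff) that complicate the triangular case in Theorem \ref{thm:trimatrices1} do not arise.
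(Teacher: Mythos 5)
Your proof is correct and follows essentially the same route as the paper's: both identify the bi-invariant Haar density on $D$ as the product $\prod_j da_{jj}/|a_{jj}|$ and conclude from the product structure. The paper stops at ``the digit laws follow''; your version simply makes explicit the Fubini/tube-volume cancellation against Definition \ref{def:diglawnoncpt} and the reduction to Theorem \ref{thm:1/xBenford}, which is exactly what that phrase is suppressing.
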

\begin{proof}
The bi-invariant Haar measure on $D$ is
\be dg\ = \  \frac{1}{a_{11}a_{22}\cdots a_{nn}}\ da_{11}da_{22}\cdots da_{nn},\ee
where $da_{ii}$ is the Lebesgue measure on $\R$. The digit laws follow.
\end{proof}

\begin{thm}
\label{diagdet1}
Let $D_1$ be the group of real-valued, determinant $1$ diagonal matrices:
\be D_1 \ = \  \left\{
\begin{bmatrix}
a_{11} &   \ldots & 0\\
\vdots & \ddots & \vdots\\
0        &\ldots & a_{nn}
\end{bmatrix}, \prod{a_{ii}}\ = \ 1 \right\};\ee
For each $i$ between $1$ and $n$, the leading digit law of $a_{ii}$ with respect to the bi-invariant Haar density $dg$ is $B$-Benford.
\end{thm}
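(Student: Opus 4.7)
The plan is to realize $D_1$ as an $(n-1)$-dimensional abelian Lie group whose bi-invariant Haar measure, after a logarithmic change of coordinates, becomes Lebesgue measure on a hyperplane. The computation then reduces, via Fubini, to the one-dimensional case of Theorem \ref{thm:1/xBenford}. Throughout I use that $D_1$ is abelian (so left- and right-invariant measures coincide) and that $S_n$ acts on $D_1$ by permuting diagonal entries as a Haar-preserving automorphism; this reduces the problem to proving the digit law for $a_{11}$.

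First I would compute the Haar measure in convenient coordinates. On the identity component of $D_1$ (all $a_{jj}>0$), set $x_j := \log a_{jj}$ subject to $x_1 + \cdots + x_n = 0$, identifying this component with the hyperplane $H = \{x \in \R^n : \sum_j x_j = 0\}$. Group multiplication on $D_1$ becomes vector addition on $H$, so the bi-invariant Haar measure is the induced Lebesgue measure on $H$; the remaining $2^{n-1}$ sign components are translates and carry the same density. Equivalently, using $(a_{11}, \ldots, a_{n-1,n-1})$ as free parameters and eliminating $a_{nn} = 1/\prod_{j<n} a_{jj}$, one obtains $dg = \prod_{j=1}^{n-1} da_{jj}/|a_{jj}|$.

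Next I would identify the one-parameter subgroup needed in Definition \ref{def:diglawnoncpt} for the coordinate $a_{11}$. Take $X \in \fd_1$ proportional to $E_{11} - \tfrac{1}{n-1}\sum_{j \geq 2} E_{jj}$, so that $\exp(tX)$ has $a_{11}(t) = e^t$ (any scalar from normalizing $X$ to unit length cancels in the ratio \eqref{equ:Haardef3}). The orthogonal complement $X^\perp \subset \fd_1$ is the $(n-2)$-dimensional space of traceless diagonal matrices with vanishing $(1,1)$-entry, and deformations within $X^\perp$ leave $a_{11}$ unchanged. By Fubini on $H$, the Haar measure of the tube $\exp(U_\epsilon([0,\log s)X))$ factors as $(\log s)\cdot V_\epsilon$, where $V_\epsilon = \vol_{n-2}(\{Y \in X^\perp : \|Y\| < \epsilon\})$ is independent of $s$.

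Substituting into \eqref{equ:Haardef3}, the factor $V_\epsilon$ cancels between numerator and denominator, yielding
\be
{\rm Prob}(S_B(a_{11}) \le s) \ = \ \lim_{k \to \infty} \frac{k \log s \cdot V_\epsilon}{k \log B \cdot V_\epsilon} \ = \ \log_B s,
\ee
so the digit law is $\psi_1$, and the $S_n$-symmetry of $D_1$ transfers this conclusion to every $a_{ii}$. The main point requiring care is verifying the clean Lebesgue-on-hyperplane description of $dg$ (which in turn uses that $D_1$ is abelian, hence unimodular with trivial modular function) and the resulting factorization along $X$ and $X^\perp$; once that setup is in place, the argument collapses to the one-dimensional computation of Theorem \ref{thm:1/xBenford}.
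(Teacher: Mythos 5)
Your proof is correct and follows the same essential route as the paper: parametrize $D_1$ by $(a_{11},\ldots,a_{n-1,n-1})$, compute the bi-invariant Haar density $\prod_{j=1}^{n-1} da_{jj}/|a_{jj}|$, and read off the digit law. You are more careful than the paper's very terse proof — in particular you make explicit the logarithmic change of coordinates identifying $D_1$ with a hyperplane carrying Lebesgue measure, the choice of one-parameter direction $X$ in $\fd_1$, the Fubini factorization of the tube measure into $(\log s)\cdot V_\epsilon$ so that the transverse volume cancels in the ratio of \eqref{equ:Haardef3}, and the $S_n$-symmetry reducing all $a_{ii}$ to $a_{11}$; you also implicitly correct what looks like a typographical slip in the paper's displayed density (its $da_{nn}$ should read $da_{n-1,n-1}$).
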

\begin{proof}
$D_1$ is diffeomorphic to the graph of \be (a_{11},\ldots, a_{n-1,n-1})\ \mapsto\ \frac{1}{a_{11}a_{22}\cdots a_{n-1,n-1}}\ee
and hence is diffeomorphic to an open sub-manifold of $\R^{n-1}$. The bi-invariant Haar measure on $D_1$ is thus
\be dg\ = \  \frac{1}{a_{11}a_{22}\cdots a_{n-1,n-1}}da_{11}da_{22}\cdots da_{nn},\ee
where $da_{ii}$ is the Lebesgue measure on $\R$. The digit laws follow.
\end{proof}

\subsection{Main Results}

Our first result concerns the distribution of entries from ${\rm SL}_n(\R)$. Denote by $L,U,D_1\subset G$ the subgroups of unipotent lower triangular, unipotent upper triangular, and diagonal subgroup of ${\rm SL}_n(\R)$. Then $g\in G$ can be uniquely expressed as $g=lud, l\in L,u\in U, d\in D_1$. Note that each of $L,U,D_1$ is topologically closed in ${\rm SL}_n(\R)$, and hence each is a Lie subgroup of $G$. If $\fl,\fu,\fd_1$ be the Lie algebras of $L,U,D$ respectively then $\fl,\fu,\fd_1$ have the vector space basis (which we review in Appendix \ref{app:llg}):
\be \fl \ = \  \spans_{\ \!\! \R}(\{ {E_{i,j}}\}_{i>j}),\ \ \ \fu \ = \  \spans_{\ \!\! \R}(\{ {E_{i,j}}\}_{i<j}), \ \ \ \fd_1\ = \ \spans_{\ \!\! \R}(E_{i,i}-E_{i+1,i+1})_{1\leq i \leq n-1},\ee where $E_{i,j}$ is the $n\times n$ matrix with $1$ in the $(i,j)$ position and zeroes elsewhere.

\begin{thm}\label{SLnBenford}
Let $dg$ be the normalized Haar measure on ${\rm SL}_n(\R)$, $\phi\in C_c(G)$. Then
\be \int_G \phi(g)dg\ = \ \int_{\fl}\int_{\fu}\int_D\phi(\exp(X)\exp(Y)a)\; \! da\; \! dX\; \! dY,\ee
where $dX,dY$ are the Lebesgue measures on $\fl,\fu$ and
\be da\ = \ \prod_{i=1}^{n-1} \frac{da_{ii}}{a_{ii}} \ee
is the Haar measure on $D_1$. Consequently, the joint distribution of diagonal components is a product of $B$-Benford measures.
\end{thm}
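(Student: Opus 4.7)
The plan is to establish the Haar measure factorization by computing the pullback $\Phi^* dg$ along the multiplication map $\Phi:L\times U\times D_1\to G$, $\Phi(l,u,d)=lud$, and showing it coincides with the product $dX\,dY\,da$. Once the factorization is in hand, the claim about the joint distribution of diagonal components is immediate from Theorem \ref{diagdet1} and Theorem \ref{thm:1/xBenford}.

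First I would note that $\Phi$ is a diffeomorphism from $L\times U\times D_1$ onto the ``big Bruhat cell,'' the open dense subset of $G$ on which every leading principal minor is nonzero; its complement is a finite union of subvarieties of positive codimension and hence has Haar measure zero, so integration against $dg$ reduces to integration over $L\times U\times D_1$ via $\Phi$. Using left-invariance of $dg$ to identify $T_gG$ with $\fg$ via the Maurer--Cartan form, one computes at $g=lud$ that
\begin{equation*}
g^{-1}\,d\Phi_{(l,u,d)}(lX,uY,dH)\;=\;\Ad(d^{-1}u^{-1})X+\Ad(d^{-1})Y+H\ \in\ \fg=\fl\oplus\fu\oplus\fd_1.
\end{equation*}
Because $\Ad(D_1)$ stabilizes each of $\fl$, $\fu$, $\fd_1$, the matrix of this linear map in the ordered basis $(\fl,\fu,\fd_1)$ is block lower triangular, so the Jacobian is the product of the three diagonal block determinants.

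The $\fd_1$-block is the identity. The $\fu$-block is $\Ad(d^{-1})|_\fu$, which sends $E_{ij}\mapsto(a_{jj}/a_{ii})E_{ij}$ for $i<j$. The $\fl$-block factors as $\Ad(d^{-1})|_\fl\circ\pi_\fl\Ad(u^{-1})|_\fl$, and I would handle the second factor via the $\Z$-grading $\deg E_{ij}=j-i$, under which $\fu$ sits in strictly positive degree: expanding $\Ad(u^{-1})=e^{-\ad Y}$, every iterated bracket raises the degree, so $\Ad(u^{-1})X-X$ has components only in degrees strictly greater than those present in $X$. Ordering the basis of $\fl$ by increasing degree makes $\pi_\fl\Ad(u^{-1})|_\fl$ upper unitriangular, of determinant $1$. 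Combining, the Jacobian equals
\begin{equation*}
\prod_{i>j}\frac{a_{jj}}{a_{ii}}\cdot\prod_{i<j}\frac{a_{jj}}{a_{ii}}\;=\;\prod_{i\neq j}\frac{a_{jj}}{a_{ii}}\;=\;1,
\end{equation*}
where the final equality follows by pairing each factor with its reciprocal across $(i,j)\leftrightarrow(j,i)$.

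Thus $\Phi^*dg=dX\wedge dY\wedge da$, where $dX,dY$ are Lebesgue on $\fl,\fu$ and $da$ is the bi-invariant Haar measure on $D_1$, which by Theorem \ref{diagdet1} equals $\prod_{i=1}^{n-1}da_{ii}/a_{ii}$. The marginal on $D_1$ is therefore a product of copies of $da/a$, each $B$-Benford by Theorem \ref{thm:1/xBenford}, which gives the joint distribution claim. I expect the main obstacle to be the Jacobian calculation, and specifically the two cancellations it relies on: the grading-based unipotency argument that eliminates the $\Ad(u^{-1})$ contribution, and the $(i,j)\leftrightarrow(j,i)$ telescoping that removes all dependence on $d$ from the weight product.
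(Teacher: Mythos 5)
Your proof follows the paper's own approach essentially verbatim: parametrize the big cell by the $LUD_1$ factorization, pull back Haar via the Maurer--Cartan form to obtain a block lower triangular Jacobian with diagonal blocks $\pi_\fl\Ad((ud)^{-1})|_\fl$, $\Ad(d^{-1})|_\fu$ and the identity on $\fd_1$, dispose of the unipotent contribution by a graded triangularity argument, and cancel the two $\Ad(d^{-1})$ weight products. Your phrasing of the unipotent step (unitriangular of determinant $1$ in the sub-diagonal grading) is a touch more precise than the paper's ``$\Ad(u^{-1})|_\fl = \operatorname{id}$,'' and you make the dense-open-cell/measure-zero reduction explicit where the paper leaves it implicit, but the substance and the key cancellations are the same.
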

The next corollary follows immediately from the invariance of $dg$ on $SL_n(\R)$:
\begin{cor}
\label{cor:othercomponentsSLn}
Let $P,Q\in {\rm SL}_n(\R)$ be even order permutation matrices. For $A\in {\rm SL}_n(\R)$, the joint distribution of the diagonal components of $PAQ$ are a product of $B$-Benford measures.
\end{cor}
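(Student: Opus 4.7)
The plan is to deduce the corollary directly from Theorem \ref{SLnBenford} via the bi-invariance of the Haar measure on $SL_n(\R)$. First I would note that the hypothesis on $P, Q$ places them in $SL_n(\R)$: as even permutation matrices they satisfy $\det P = \det Q = +1$. Second I would cite unimodularity of $SL_n(\R)$, which follows because $SL_n(\R)$ is connected semisimple (so its modular function is trivial), to upgrade the left-invariant Haar measure $dg$ of Theorem \ref{SLnBenford} to bi-invariance. Consequently the diffeomorphism $\Phi_{P,Q} \colon A \mapsto PAQ$ of $SL_n(\R)$ preserves $dg$, i.e.\ $\Phi_{P,Q}^{\ast} dg = dg$.

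Once bi-invariance is in hand, the rest is a transport-of-structure argument. The joint digit law of the diagonal $LUD_1$-components, in the sense of Definition \ref{def:diglawnoncpt}, is defined purely as a limit of ratios of Haar measures of tubes around one-parameter subgroups of $D_1$. Because $\Phi_{P,Q}$ preserves $dg$, each such tube computed for $A$ maps to a set of identical Haar measure computed for $PAQ$. The joint digit law of the diagonal components of $PAQ$ therefore equals that of $A$, which by Theorem \ref{SLnBenford} is a product of $B$-Benford measures.

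The only step needing any care is the verification of bi-invariance of $dg$ on $SL_n(\R)$, together with the observation that the hypothesis of even parity is exactly what places $P, Q \in SL_n(\R)$ so that this bi-invariance may be applied on both sides. After that, no further computation is required; the corollary amounts to the distributional equality $PAQ \stackrel{d}{=} A$ evaluated at the diagonal coordinate functions $a_{11}, \dots, a_{nn}$ of the $LUD_1$-decomposition.
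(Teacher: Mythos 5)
Your proof is correct and takes essentially the same route as the paper, which disposes of the corollary in one line (``follows immediately from the invariance of $dg$ on ${\rm SL}_n(\R)$''). You correctly unpack what that invariance means: bi-invariance of $dg$, which holds because ${\rm SL}_n(\R)$ is connected semisimple and hence unimodular, so $A\mapsto PAQ$ is Haar-measure-preserving, with the evenness hypothesis being precisely what puts $P$ and $Q$ in ${\rm SL}_n(\R)$ so that both the left and right translations stay within the group.
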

In other words, the joint distribution of $n$ components is a product of $B$-benford measures if there is an even permutation of the rows and columns which sends the $n$ components to the diagonal components. As an immediate consequence of the above, we obtain results on the behavior of determinants of matrices from ${\rm GL}_n(\R)^+$ (Theorem \ref{thm:glndetBenford}). For other results related to Benford's law and matrices, see \cite{B--}, who prove that as the size of matrices with entries i.i.d.r.v. from a nice fixed distribution tends to infinity, the leading digits of the $n!$ terms in the determinant expansion converges to Benford's law. Also see \cite{BH3} for results arising from powers of fixed matrices.

When $G$ is compact, the Haar measure may be normalized to be an invariant probability measure on $G$, affording a global definition of digit law, stated next.
\begin{definition}
\label{def:diglawcpt}
Fix a base $B>0$. Let $G$ be a compact connected Lie group, $\mu$ a positive countably additive probability measure on $G$, $f:G\to \R$ measurable. We saw that $(G,\mu,f)$ satisfies the digit law $\psi$ if
\begin{equation}
{\rm Prob}(S_B(f(g)) < s)\ =\ \int_{1}^s\psi(x)\ dx.
\end{equation}
\end{definition}
We shall see that when $G=O(n)$ or $U(n)$, $f$ is a projection of $G$ onto the $(i,j)$-th component and $\mu$ is Haar, the digit laws come as a consequence of digit laws from a point drawn at random from a unit sphere (see Corollary \ref{cor: cptgrpdiglaw}). So our next result yields digit laws for components of a point drawn at random on an $n$-dimensional sphere of radius $r$: \be S^n(r) \ := \  \{x\in\R^{n+1}: \vert x\vert = r\}.\ee

\ \\
\texttt{We adopt the notational convention for the unit sphere: $S^n:=S^n(1)$.}\ \\

\begin{thm}\label{thm:firstcomponentsphere} Let $x_1$ be the first component of an $x\in S^n$ chosen uniformly at random. We have for $1 \le a \le b \le B$ that
\begin{equation}\label{eq:probdensityx1sphere}
{\rm Prob}(a< S_B(x_1)<b) \ = \ \frac{2}{\sqrt{\pi}}\frac{\Gamma(n/2+1/2)}{\Gamma(n/2)}\sum_{i=1}^\infty \int_{a B^{-i}}^{b B^{-i}}(1-x_1^2)^{n/2-1}\ dx_1.
\end{equation}
\end{thm}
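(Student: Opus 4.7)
The plan is to reduce the theorem to a standard computation of the marginal density of one coordinate of a uniform point on $S^n$, and then translate the event on the significand into a disjoint union of events on that coordinate.

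First I would compute the marginal density $f(x_1)$ of the first coordinate of an $x\in S^n\subset \mathbb{R}^{n+1}$ chosen uniformly. The slice of $S^n$ at height $x_1$ is a round sphere of dimension $n-1$ and radius $\sqrt{1-x_1^2}$; its surface-area element in terms of $dx_1$ is
\begin{equation*}
|S^{n-1}|\,(1-x_1^2)^{(n-1)/2}\cdot \frac{dx_1}{\sqrt{1-x_1^2}} \ = \ |S^{n-1}|\,(1-x_1^2)^{n/2-1}\,dx_1,
\end{equation*}
so dividing by $|S^n|$ and using $|S^{k}|=2\pi^{(k+1)/2}/\Gamma((k+1)/2)$ produces
\begin{equation*}
f(x_1)\ =\ \frac{\Gamma((n+1)/2)}{\sqrt{\pi}\,\Gamma(n/2)}(1-x_1^2)^{n/2-1}, \qquad x_1\in[-1,1].
\end{equation*}
Equivalently, one can argue via the Gaussian representation $x=Y/|Y|$ with $Y\sim N(0,I_{n+1})$, observe that $x_1^2\sim \mathrm{Beta}(1/2,n/2)$, and recover the same density by a change of variables; I would include this as a sanity check.

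Next I would unfold the significand condition. Since $|x_1|\le 1$ almost surely, every $x_1\ne 0$ can be written uniquely as $|x_1|=S_B(x_1)\,B^{-i}$ for some integer $i\ge 1$ with $S_B(x_1)\in[1,B)$. Hence the event $\{a<S_B(x_1)<b\}$ decomposes as the disjoint union
\begin{equation*}
\{a<S_B(x_1)<b\}\ =\ \bigsqcup_{i=1}^{\infty}\bigl\{\,aB^{-i}<|x_1|<bB^{-i}\,\bigr\}.
\end{equation*}

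Finally, I would sum the probabilities of these pieces. The density $f$ is even in $x_1$, so
\begin{equation*}
\mathrm{Prob}\bigl(aB^{-i}<|x_1|<bB^{-i}\bigr) \ =\ 2\int_{aB^{-i}}^{bB^{-i}} f(x_1)\,dx_1,
\end{equation*}
and summing over $i\ge 1$ and inserting the explicit $f$ yields exactly \eqref{eq:probdensityx1sphere}. Interchanging the sum and the probability measure is harmless because the pieces are disjoint.

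The only nontrivial step is the marginal-density calculation, and even that is standard; the rest is bookkeeping from the definition of the significand. The Beta-distribution viewpoint will also be useful later for isolating the role of the dimension $n$ in subsequent theorems about $O(n)$ and $U(n)$.
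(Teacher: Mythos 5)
Your proposal is correct and follows the same route as the paper: slice $S^n$ at height $x_1$ to obtain the marginal density proportional to $(1-x_1^2)^{n/2-1}$, normalize by the ratio of surface areas, then decompose $\{a < S_B(x_1) < b\}$ into the disjoint union over scales $B^{-i}$ and sum, using evenness of the density to pick up the factor of $2$. The paper routes the normalization through the identity $S_n(r)=V_n'(r)=\frac{n+1}{r}V_n(r)$ rather than quoting $|S^{n-1}|/|S^n|$ directly, but this is cosmetic, and your Gaussian/Beta sanity check is a welcome aside.
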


As $n\to\infty$, Stirling's formula implies the above converges to integrating a Gaussian density, where ${\rm erf}$ is the standard error function: \be {\rm erf}(x) \ := \ \frac{2}{\sqrt{\pi}} \int_0^x e^{-t^2}dt. \ee

\begin{lem}\label{WTerf}
Fix a base $B>1$ and $1\leq a<b < B$. Let $x_1$ and $x$ be as in Theorem \ref{thm:firstcomponentsphere}. As $n \to \infty$, ${\rm Prob}(a\leq S_B(x_1) < b)$ is well-approximated by
\begin{eqnarray} \sum_{i=1}^{\infty} \frac{2}{\sqrt{\pi}}\int_{\sqrt{\frac{n}{2}}\frac{a}{B^i}}^{\sqrt{\frac{n}{2}}\frac{b}{B^i}}e^{-x^2}\ dx \ = \  \sum_{i=1}^{\infty} \left( \erf\left(\sqrt{\frac{n}{2}}\frac{b}{B^i}\right) - \erf\left(\sqrt{\frac{n}{2}}\frac{a}{B^i}\right)\right),\end{eqnarray}
in the sense that
\be \lim_{n\to \infty} \left| {\rm Prob}(a\leq S_B(x_1) < b) - \sum_{i=1}^{\infty} \frac{2}{\sqrt{\pi}}\int_{\sqrt{\frac{n}{2}}\frac{a}{B^i}}^{\sqrt{\frac{n}{2}}\frac{b}{B^i}}e^{-x^2}\ dx\ \right|\ =\ 0. \ee
\end{lem}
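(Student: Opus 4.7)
The plan is to start from the exact expression in Theorem \ref{thm:firstcomponentsphere} and rescale to the Gaussian scale, then bound the discrepancy term by term. In each integral on the right-hand side of \eqref{eq:probdensityx1sphere} I would substitute $u = x_1\sqrt{n/2}$, so that the limits become $c_i := aB^{-i}\sqrt{n/2}$ and $d_i := bB^{-i}\sqrt{n/2}$, the Jacobian contributes $\sqrt{2/n}$, and the integrand becomes $(1-2u^2/n)^{n/2-1}$. Applying Stirling's formula in the form $\Gamma(n/2+\tfrac12)/\Gamma(n/2) = \sqrt{n/2}\,(1+O(1/n))$ collapses the combined prefactor $\tfrac{2}{\sqrt{\pi}}\,\tfrac{\Gamma(n/2+1/2)}{\Gamma(n/2)}\sqrt{2/n}$ to $\tfrac{2}{\sqrt{\pi}}(1+O(1/n))$, identifying the Gaussian normalization in the target sum.

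The core estimate is the uniform comparison $(1-2u^2/n)^{n/2-1} = e^{-u^2}\bigl(1 + O(u^4/n + 1/n)\bigr)$ valid for $0 \le u \le n^{1/4}$, obtained from the Taylor expansion $(n/2-1)\log(1-2u^2/n) = -u^2 + O(u^2/n + u^4/n)$. Outside this range, the log-concavity inequality $(n/2-1)\log(1-2u^2/n) \le -u^2(1-2/n)$ shows that both integrands are dominated by $e^{-u^2/2}$ once $n$ is large enough, giving a useful majorant on the high-$u$ tail.

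The main obstacle is controlling the sum of termwise errors uniformly in $i$, since the integration regions $[c_i,d_i]$ together cover a set of total length $O(\sqrt{n})$, not bounded. I would handle this by splitting each interval $[c_i,d_i]$ at the threshold $u = n^{1/4}$. On the inner part $u \le n^{1/4}$, the Taylor bound gives a pointwise error of order $e^{-u^2}(u^4/n + 1/n)$; since the intervals $[c_i,d_i]$ are pairwise disjoint (as $b < B \le aB$) and their union lies in $[0,b\sqrt{n/2}/B]$, summing over $i$ amounts to integrating this error over a subset of $[0,n^{1/4}]$, which is bounded by $\tfrac{C}{n}\int_0^\infty (1+u^4)e^{-u^2}\,du = O(1/n)$. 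On the outer part $u > n^{1/4}$, both integrands are bounded by $e^{-u^2/2} \le e^{-\sqrt{n}/2}$, and since the total length of intervals meeting $[n^{1/4},\infty)$ is at most $O(\sqrt{n})$, the outer contribution is $O(\sqrt{n}\,e^{-\sqrt{n}/2}) = o(1)$. Combining these with the $O(1/n)$ error from the Stirling prefactor (multiplied against an $O(1)$ total mass) yields the desired $\lim_{n\to\infty}|P_n - Q_n| = 0$, where $P_n$ and $Q_n$ denote the probability on the left-hand side and the Gaussian-sum approximation on the right.
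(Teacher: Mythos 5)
Your proposal is correct and follows the same overall plan as the paper---Stirling on the Gamma ratio, the rescaling $x = u\sqrt{2/n}$, and identification of the Gaussian integrand---but it supplies the uniform error control that the paper's proof quietly omits. The paper stops at a ``$\approx$'' step, replacing $(1-y^2/(n/2))^{n/2-1}$ by $e^{-y^2}$ inside the infinite sum without justifying why pointwise convergence of the integrand suffices when the integration regions $[c_i,d_i]$ have total length of order $\sqrt{n}$. That is exactly the gap you identify and close: the Taylor bound $(1-2u^2/n)^{n/2-1} = e^{-u^2}\bigl(1 + O(u^4/n + 1/n)\bigr)$ on $u\le n^{1/4}$ combined with the observation that the disjoint intervals $[c_i,d_i]$ lie inside $[0, b\sqrt{n/2}/B]$, so the inner errors integrate to $O(1/n)$; and on $u>n^{1/4}$ the inequality $\log(1-x)\le -x$ gives the majorant $e^{-u^2(1-2/n)} \le e^{-u^2/2}$, so the tail contribution is bounded by $2\int_{n^{1/4}}^{\infty} e^{-u^2/2}\,du \to 0$ (you do not even need the $O(\sqrt{n})$ length estimate once you use disjointness to compare the sum of integrals to a single integral over $[n^{1/4},\infty)$). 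The Stirling prefactor error multiplies a quantity that is a priori bounded (it is within $1+o(1)$ of a probability), so that piece is also $o(1)$. In short, your argument is the paper's argument made rigorous; no new ideas beyond what the paper gestures at, but the error bookkeeping is genuinely needed and you have done it correctly.
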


\begin{rek}\label{rek:distrsphereslimitperiodic}
Lemma \ref{WTerf} has an interesting consequence.  First, consider the sequence of spheres $S^{nB^{2\ell}}$, $\ell\in\N$. For $n$ sufficiently large, with $\sqrt{\frac{n}{2}}\frac{1}{B}>4$, Then
\begin{eqnarray}
{\rm Prob}(a\leq S_B(x_1) < b, x\in S^{nB^{2\ell}}) & \ \approx\ \ & \sum_{i=1}^{\infty} \erf\left(\sqrt{\frac{nB^{2\ell}}{2}}\frac{b}{B^i}\right) - \erf\left(\sqrt{\frac{nB^{2\ell}}{2}}\frac{a}{B^i}\right)\nonumber\\
& \ = \ &   \sum_{i=1-\ell}^{\infty} \erf\left(\sqrt{\frac{n}{2}}\frac{b}{B^i}\right) - \erf\left(\sqrt{\frac{n}{2}}\frac{a}{B^i}\right).
\end{eqnarray}

By choice of $n$, the additional terms from extending the sums to all $i$ are more than $4$ standard deviations from the mean, and contribute negligibly to the sum in the limit. Hence for $n$ sufficiently large,
\be \lim_{\ell\to\infty} {\rm Prob}(a\leq S_B(x_1) < b, x\in S^{nB^{2\ell}}) \ = \  \sum_{i=-\infty}^{\infty} \erf\left(\sqrt{\frac{n}{2}}\frac{b}{B^i}\right) - \erf\left(\sqrt{\frac{n}{2}}\frac{a}{B^i}\right).\ee

For fixed $n\in\N$, it follows that the leading digit law of $x_1$ in $S^{nB^{2\ell}}$, as $\ell \to \infty$, tends to the digit law $F_n: [1,B)\to [0,1)$ whose cumulative distribution function is given by
\begin{equation}
\label{equ:limitingspherelaws}
F_n(x) \ := \  \sum_{i=-\infty}^{\infty} \erf\left(\sqrt{\frac{n}{2}}\frac{x}{B^i}\right) - \sum_{i=-\infty}^{\infty} \erf\left(\sqrt{\frac{n}{2}}\frac{1}{B^i}\right).\ee  As $F_n(x)= F_{nB^{2}}$ for any $n\in\N$, it follows that leading digit law of $x_1$ in $S^k$, $k\to \infty$, falls into the periodic cycle of $B^2-1$ limiting digit laws $F_n$, $1\leq n <B^2$ as defined in \eqref{equ:limitingspherelaws}. We plot a representative set of $n$ in Figure \ref{fig:spherebenford}. \end{rek} Lemma \ref{WTerf} and its consequences can be generalized to a fixed number of components; we do this in Lemma \ref{WTerf2}.

\begin{figure}
\begin{center}
\scalebox{.56}{\includegraphics{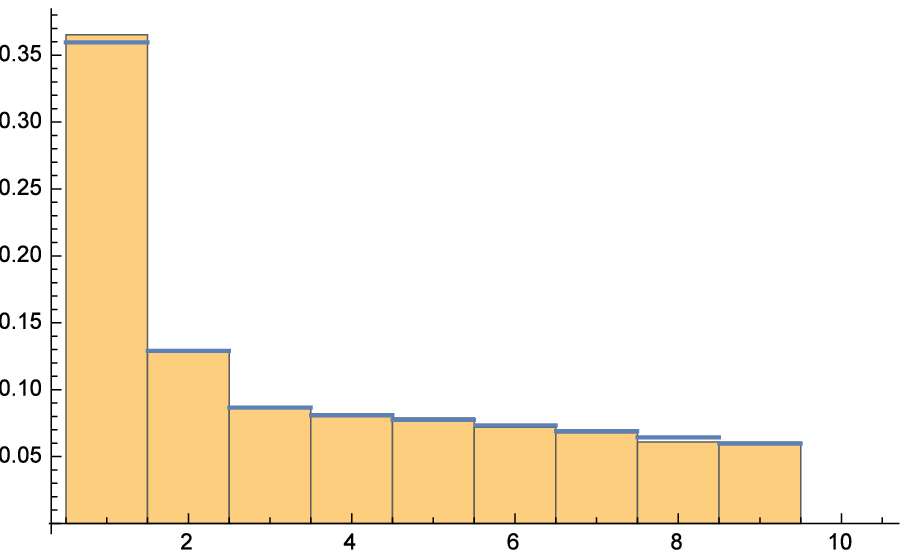}} \ \ \scalebox{.56}{\includegraphics{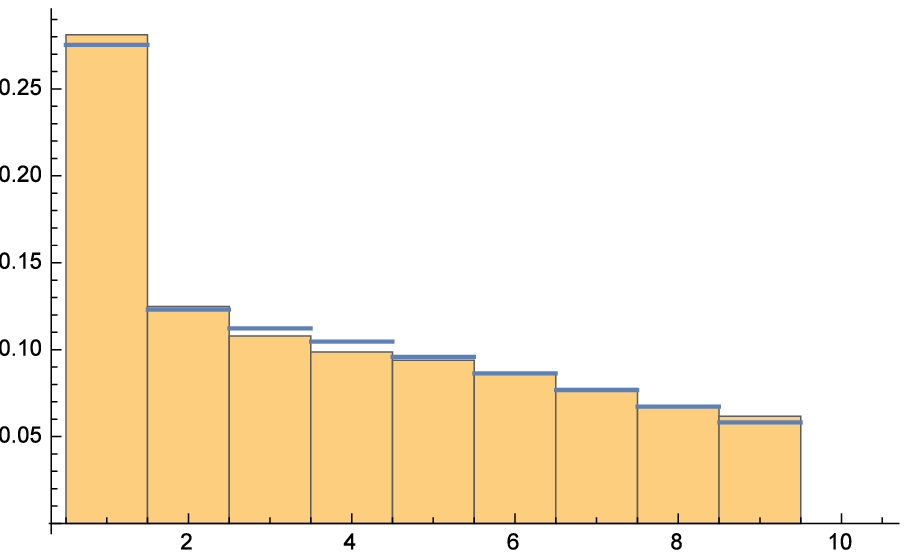}} \ \
\scalebox{.56}{\includegraphics{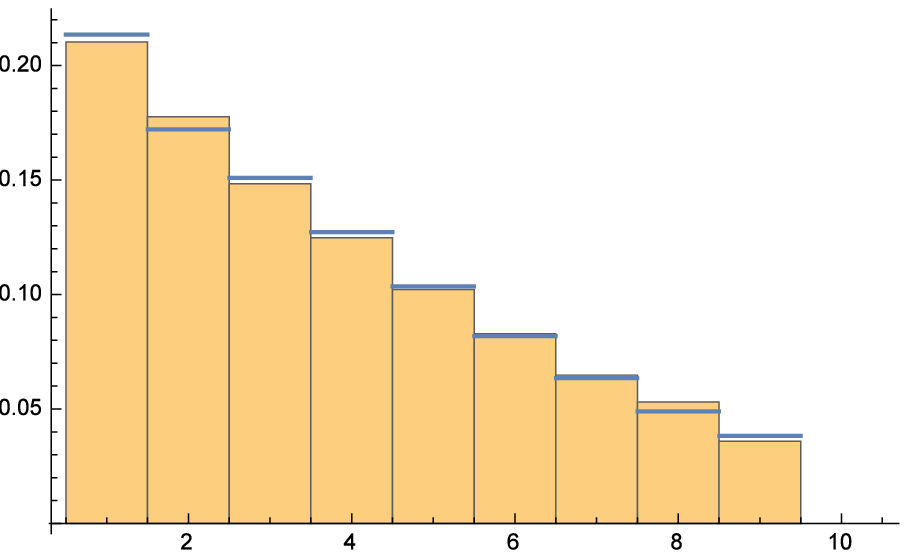}}\\
\scalebox{.56}{\includegraphics{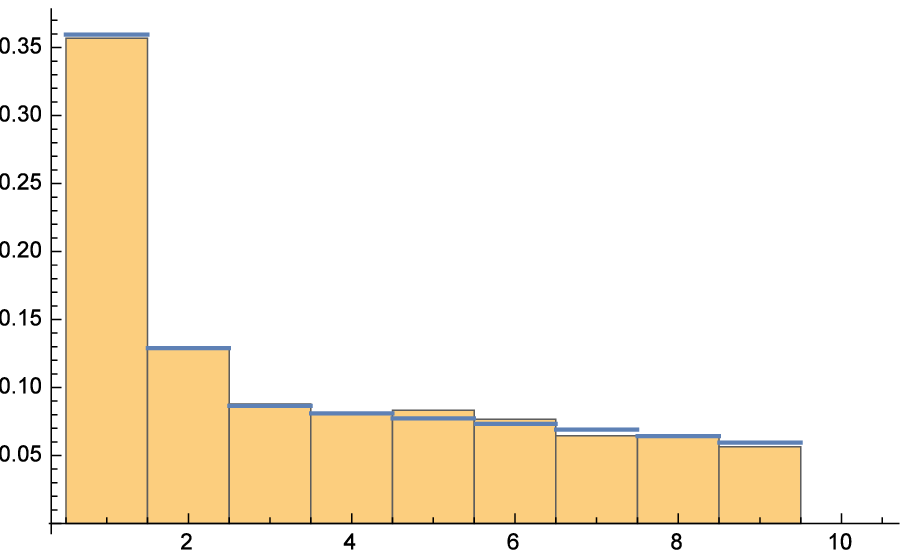}} \ \ \scalebox{.56}{\includegraphics{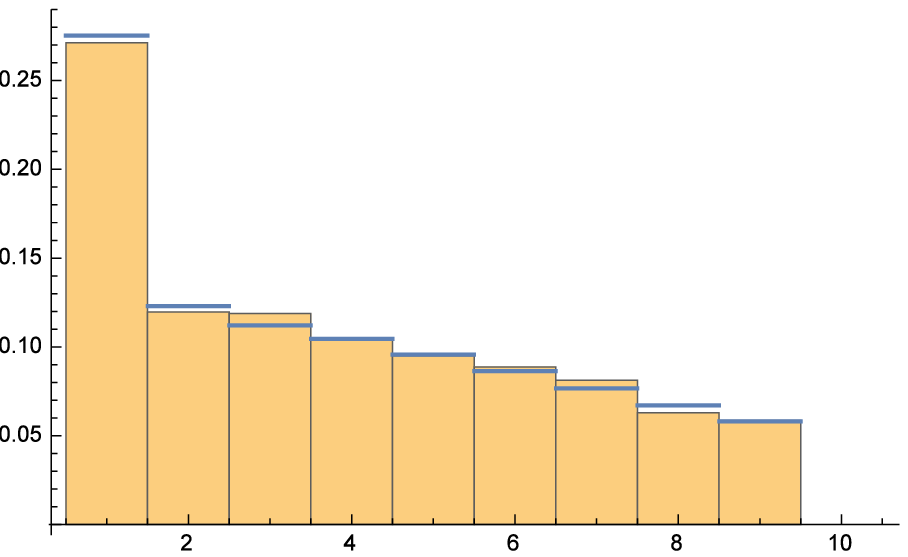}} \ \
\scalebox{.56}{\includegraphics{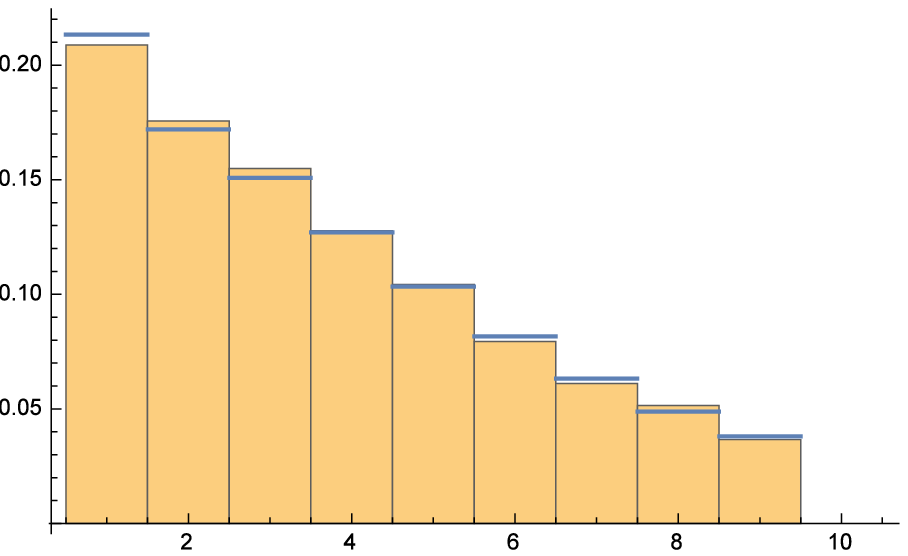}}\\
\caption{\label{fig:spherebenford} The distribution of the first digits base $B=10$ of the first component of points uniformly chosen on a sphere with $n$ components. Top row: $n \in \{100, 200, 500\}$. Bottom row: $n \in \{10000, 20000, 50000\}$. Notice the periodicity when $n$ increases by a factor of $B^2 = 100$.}
\end{center}\end{figure}
The spherical digit law in one component (Lemma \ref{WTerf}) yield digit laws for the compact matrix group $O_n(\R)$, stated next.
\begin{cor}\label{cor: cptgrpdiglaw}
The leading digit law in the $(i,j)$ component of ${\rm O}_n(\R)$ with respect to Haar is the leading digit law of $x_1$ in $S^{n-1}$ with respect to the uniform measure.
\end{cor}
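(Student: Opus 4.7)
The plan is to reduce the problem to the well-known fact that each column (or row) of a Haar-distributed orthogonal matrix is uniformly distributed on the unit sphere $S^{n-1}$, so that any single entry has the same distribution as a fixed component of a uniform point on $S^{n-1}$. The leading digit law in the $(i,j)$ entry is then, by definition, the leading digit law of such a coordinate, and the corollary follows from Theorem~\ref{thm:firstcomponentsphere} (and its companion Lemma~\ref{WTerf}).

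First I would fix $A \in O_n(\R)$ distributed according to normalized Haar measure $dg$, and denote by $e_1,\ldots,e_n$ the standard basis of $\R^n$. The $(i,j)$ entry of $A$ is $a_{ij}=e_i^{\top}A e_j$, i.e.\ the $i$-th coordinate of the $j$-th column vector $A e_j \in S^{n-1}$. For any $U \in O_n(\R)$, left invariance of Haar gives $UA \stackrel{d}{=} A$, hence $U(Ae_j)\stackrel{d}{=}Ae_j$; since $U$ is an arbitrary orthogonal transformation, the distribution of $A e_j$ is rotation-invariant on $S^{n-1}$. By uniqueness of the rotation-invariant probability measure on $S^{n-1}$, $A e_j$ is uniform on $S^{n-1}$.

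Next I would exploit the obvious permutation symmetry of the uniform measure on $S^{n-1}$: for any permutation matrix $P$, if $x$ is uniform on $S^{n-1}$ then so is $Px$. Taking $P$ to be the permutation exchanging coordinate $1$ with coordinate $i$ shows that $(Ae_j)_i$ has the same distribution as $x_1$, where $x$ is uniform on $S^{n-1}$. Therefore
\[
\mathrm{Prob}\bigl(S_B(a_{ij}) \le s\bigr) \;=\; \mathrm{Prob}\bigl(S_B(x_1) \le s\bigr),
\]
and applying Definition~\ref{def:diglawcpt} to $(O_n(\R),dg,p_{i,j})$ identifies its digit law with that of $x_1$ on $S^{n-1}$, as claimed.

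No step is really an obstacle here; the content is purely the invariance bookkeeping. The only point to watch is that one must use Haar invariance (not merely symmetry of $O_n(\R)$) to show $Ae_j$ is rotation invariant on $S^{n-1}$, and that the reduction from entry $(i,j)$ to entry $(1,1)$ uses the further symmetry that permutations of coordinates act by orthogonal transformations, preserving the uniform measure on the sphere.
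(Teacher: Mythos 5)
Your proof is correct and, if anything, tighter than the paper's. The paper proceeds by first using permutation matrices $P,Q\in O_n(\R)$ and bi-invariance of Haar to move the $(i,j)$ entry to the $(1,1)$ entry, and then constructs a Haar-measure-preserving identification of $O_n(\R)$ with a product $S^{n-1}\times S^{n-2}\times\cdots\times S^0$ by building the matrix one orthonormal column at a time; only the first factor is then used. You instead take the push-forward of Haar under $A\mapsto Ae_j$, observe that left invariance makes this push-forward rotation-invariant, and invoke uniqueness of the rotation-invariant probability measure on $S^{n-1}$ to conclude $Ae_j$ is uniform; the coordinate reduction from $i$ to $1$ is then done on the sphere via a permutation matrix acting as an orthogonal map. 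This avoids the column-by-column construction entirely and isolates exactly the piece of structure the corollary uses. The two arguments are close in spirit (both hinge on Haar invariance and the identification of a column with a uniform sphere point), but yours is shorter because it does not establish the full product-of-spheres decomposition, only the marginal in a single column, which is all that is needed. One small point worth keeping explicit, as you do: the permutation step must preserve the relevant measure, and this holds because permutation matrices lie in $O_n(\R)$ (for the sphere) and because Haar on the compact group $O_n(\R)$ is bi-invariant (for the matrix-side reduction the paper uses).
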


\begin{proof}
As $O_n(\R)$ contains every permutation matrix $P\in {\rm GL}_n(\R)$, there exist permutation matrices $P,Q\in {\rm GL_n(\R)}$ such that $PAQ\in O_n(\R)$ sends the $(i,j)$ entry to the $(1,1)$ entry. By invaraiance of $dg$, it suffices to prove the Corollary for the $(1,1)$ component of $O_n(\R)$. Recall that any matrix $A\in {\rm O}_n(\R)$ satisfies $A^TA=I$, so the columns of ${\rm O}_n(\R)$ form an orthonormal basis of $\R^n$. We may therefore embed ${\rm O}_n(\R)$ in the product of $n$ spheres $S^{n-1}\times \cdots \times S^{n-1}$. Consider the construction of a matrix in ${\rm O}_n(\R)$ one column at a time from left to right. The first column $c_1$ can be selected arbitrarily from $S^{n-1}$. The second column $c_2$ is a vector selected in the orthogonal plane to $c_1$ in $S^{n-1}$, a set which is isometric to $S^{n-2}$. In general the $i$th column is selected in the orthogonal hyperplane to $c_1,\ldots,c_{i-1}$ in $S^{n-1}$, which is a set isometric to $S^{n-{i}}$  Since the ${\rm O}_n(\R)$ action on a subset $A\subset {\rm O}_n(\R)$ preserves the Haar measure of $A$, there is a measure preserving transformation between a basis for the Haar measurable sets of ${\rm O}_n(\R)$ and measurable subsets $A_1\times A_2,\ldots,A_n\subset S^{n-1}\times S^{n-2}\times S^0$ equipped with the uniform measure on $S^i$. Therefore, the digit law of the $(1,1)$ component of ${\rm O}_n(\R)$ is equal to the digit law of $S^{n-1}$ with the uniform measure. The leading digit law follows.
\end{proof}

Thus one sees the same asymptotic periodicity in the leading digit laws in the $(i,j)$ component of ${\rm O}_n(\R)$ with period $B^2-1$ in $n$. By invariance of $dg$, it follows that Lemma \ref{cor: cptgrpdiglaw}, \ref{WTerf2}, and formulas therein yield leading digit laws for a fixed number of components of ${\rm O}_n(\R)$, so long as all components lie in the same row or column. Lastly, analogous digit laws for the real an imaginary parts in a fixed number of components ${\rm U}_n(\C)$ are immediate, since ${\rm U}_n(\C)$ contains every permutation matrix and the first column of ${\rm U}_n(\C)$ is a point on $S^{2n-1}$.

\begin{remark}
We leave the leading digit laws of a hyperbola as future research.
\end{remark}

\ \\

We prove Theorem \ref{SLnBenford} on components of ${\rm SL}_n(\R)$ in \S\ref{sec:proofSLnBenford} (see also Appendix \ref{sec:appgeomcone} for a more geometric proof in two dimensions), and then Theorem \ref{thm:firstcomponentsphere} in \S\ref{sec:thmfirstcompsphere}, discussing some additional consequences (we have already shown above how it yields digit laws for the classical compact groups). We then finish with some concluding remarks and thoughts on future research.





\section{Proof of Theorem \ref{SLnBenford}}\label{sec:proofSLnBenford}

Let $L,U,D_1$ be lower, upper, and diagonal matrices determinant $1$ matrices, $\fl,\fu,\fd_1$ be as before; we can calculate the density of $dg$ with respect to the decomposition $G=LUD_1$. Pick any $g_0$ in $G$, and parametrize $g$ in a neighborhood of $g_0$ using exponential coordinates
\be g\ = \ g(X,Y,Z)\ = \ g_0\exp X\exp Y\exp Z.\ee
If we let
\be g(t) \ = \  g(tX,Y,Z)\ = \ g_0\exp tX\exp Y\exp Z\ee
where $X\in\fl,Y\in\fu,Z\in\fd_1$, then $\fl+\fu+\fd_1=\fg$. It follows that the derivative at $g_0$ in the direction of $X$ is
\be g'(t)\ = \ g_0(\exp tX)X\exp Y\exp Z,\ee
so that
\begin{align}
g(t)^{-1}g'(t) &\ = \ (g_0\exp tX\exp Y\exp Z)^{-1}g_0(\exp tX)X\exp Y\exp Z\nonumber\\
&\ = \ \Ad(\exp Z\exp Y)^{-1}(X)\ = \ e^{-\ad Y}e^{-\ad Z}X.
\end{align}
By a change of variables and left invariance, the differential with respect to coordinate bases of $\fl,\fu,\fd$ is given by the block matrix
\be \left[\begin{array}{c | c | c}
[\Ad(\exp Z\exp Y)^{-1}(X)]_{\fl} & 0 & 0 \\ \hline
* & [\Ad(\exp Z)^{-1}(Y)]_{\fu} & 0 \\ \hline
* & * &  Z_{\fd_1}\end{array}\right],\ee
where $[\Ad(\exp Z\exp Y)^{-1}(X)]_{\fl}$ is the part of $\Ad(\exp Z\exp Y)^{-1}(X)$ that lies in the subspace $\fl$. Thus, the volume element on $G=LUD_1$ in a neighborhood of $g_0$ is given by
\be |\det \Ad((ud)^{-1})_{\fl}||\det \Ad((d)^{-1})_{\fu}|\ee
and is independent of $g_0$. By Fubini's theorem, $\int_G \phi(g)dg$ is
\be \int_{\fl}\int_{\fu}\int_{\fd_1}\phi(\exp(X)\exp(Y)\exp(Z)) |\det \Ad((ud)^{-1})_{\fl}||\det \Ad((d)^{-1})_{\fu}|\ \: dZ dX dY.\ee
Using the ordinary basis $\{E_{i,j}\}_{i<j}$ of $\fu$, the adjoint action of the diagonal subgroup $D$ on $\fu$ is
\be \Ad(d^{-1})E_{i,j}\ = \  d^{-1}E_{i,j}d \ = \ \frac{d_{jj}}{d_{ii}}E_{i,j}.\ee
It follows that (with respect to exponential coordinates of the first kind)
\be \det \Ad(d^{-1})_{u} \ = \  \prod_{1\leq i<j\leq n} \frac{d_{jj}}{d_{ii}}.\ee
With respect to the basis $\{E_{i,j}\}_{i>j}$ of $\fl$, one can see that
\be \Ad((ud)^{-1})E_{i,j} \ = \  (ud)^{-1}E_{i,j}ud \ee takes the block form
\be \left[\begin{array}{c | c }
0_{i,n-j} & * \\ \hline
0_{n-i,n-j} & 0_{n-i,j}
\end{array}\right].\ee

Ordering the basis of $\fl$ along sub-diagonals, $\Ad((ud)^{-1})_{\fl}$ becomes upper triangular, with \be \Ad(u^{-1})_{\fl}\ = \  {\rm id}_{\fl}\ee for all $u\in R$. Therefore, $|\det \Ad((ud)^{-1})_{\fl}| =  |\det \Ad(d^{-1})_{\fl}|$, and the adjoint action of $D$ on $\fl$ is simply \be \Ad(d^{-1})_{\fl}E_{i,j} \ = \  d^{-1}E_{i,j}d \ = \  (d^TE_{i,j}^T(d^{-1})^{T})^T\ = \  (\Ad(d)E_{j,i})^T\ = \  \frac{d_{ii}}{d_{jj}}E_{i,j}.\ee
Therefore \be |\det\Ad((ud)^{-1})_{\fl}||\det \Ad((d)^{-1})_{\fu}|\ = \ 1\ee and Theorem \ref{diagdet1} completes the proof. \hfill \qed

We provide another proof of Theorem \ref{SLnBenford} through a geometric approach, based on the area of the hyperbolic sector, in Appendix \ref{sec:appgeomcone}.


\section{Proof and Consequences of Theorem \ref{thm:firstcomponentsphere}}\label{sec:thmfirstcompsphere}

For $r>0$, let \be S^n(r) \ = \  \{x\in\R^{n+1}\mid \vert x\vert = r\}\ee be the sphere of radius $r$ in $\R^{n+1}$. Denote by $V_n(r)$ and $S_n(r)$ the volume and surface area of $S^n(r)$ (recall we write $S^n$ for the unit sphere). Fix a base $B>1$ and let $S_B(x)$ be the significand function, i.e., $S_B(|y|)\in [1,B)$ is the unique number satisfying \be |y| \ = \  S_B(|y|)B^k\ee for some $k\in \Z$.

\begin{proof}[Proof of Theorem \ref{thm:firstcomponentsphere}]
Pick a point $x\in S^n$ uniformly at random, and let $x_1$ be the first component of $x$. We are interested in the leading digit distribution of $x_1$. By symmetry, the distribution for other components will be similar. Notice in $\R^{n+1}$ that for $0<a<1$ \be \{x_1 = a\} \cap S^n \ = \  S^{n-1}(\sqrt{1-a^2}).\ee Approximating the surface area in the strip $\{a<x_1<b,x\in S^n\}$ by a frustum, it follows for $n>0$  that
\begin{equation}\label {SAsphere}
{\rm Prob}(a<x_1<b,x\in S^n) \ = \  \frac{\int_a^b\frac{1}{\sqrt{1-x_1^2}}S_{n-1}(\sqrt{1-x_1^2}) dx_1}{S_n(1)}.
\end{equation}

By the familiar relationships $S_n(r) = V_n^{'}(r) = \frac{n+1}{r}V_{n}(r)$, and the closed form solution \be V_n(r) \ = \  \frac{\pi^{(n+1)/2}r^{n+1}}{\Gamma(\frac{n+1}{2}+1)},\ee
we find
\begin{align}
{\rm Prob}(a<x_1<b,x\in S^n) &\ = \  \frac{\int_a^b\frac{1}{\sqrt{1-x_1^2}}S_{n-1}(\sqrt{1-x_1^2})\ dx_1}{S_n(1)}\nonumber\\
			      &\ = \  \frac{n\int_a^b\frac{1}{1-x_1^2}V_{n-1}(\sqrt{1-x_1^2})\ dx_1}{(n+1) V_n(1)}\nonumber\\
			     &\ = \  \frac{1}{\sqrt{\pi}}\frac{n\Gamma(n/2+3/2)}{(n+1)\Gamma(n/2+1)}\int_a^b(1-x_1^2)^{n/2-1}\ dx_1\nonumber\\
			     &\ = \  \frac{1}{\sqrt{\pi}}\frac{\Gamma(n/2+1/2)}{\Gamma(n/2)}\int_a^b(1-x_1^2)^{n/2-1}\ dx_1.
\end{align}
Now, fix $a,b$, where $1\leq a < b \leq B.$ By symmetry, we may double the digit distribution in the positive half-space $x_1 > 0$. Thus
\begin{align}\label{eq:whatwaseq1}
{\rm Prob}(a<S_B(x_1)<b,x\in S^n) \ = \ \frac{2}{\sqrt{\pi}}\frac{\Gamma(n/2+1/2)}{\Gamma(n/2)}\sum_{i=1}^\infty \int_{a*B^{-i}}^{b*B^{-i}}(1-x_1^2)^{n/2-1}\ dx_1.
\end{align}
\end{proof}

For example, when $n=1$ we have
\begin{align}
{\rm Prob}(a<x_1<b,x\in S^1) &\ = \  \frac{\int_a^b\frac{1}{\sqrt{1-x_1^2}}S_{0}(\sqrt{1-x_1^2}) dx_1}{S_1(1)}\nonumber\\
&\ = \ \frac{1}{\pi}\int_a^b\frac{1}{\sqrt{1-x_1^2}}\ dx_1\nonumber\\				
&\ = \  \frac{\arcsin(b)-\arcsin(a)}{\pi},
\end{align}
and thus
\begin{align}
{\rm Prob}(a<S_B(x_1)<b,x\in S^1) &\ = \  \frac{2}{\pi} \sum_{i=1}^\infty \left(\arcsin\left(\frac{b}{B^i}\right)-\arcsin\left(\frac{a}{B^i}\right)\right),
\end{align} while for $n=2$
\begin{align}
{\rm Prob}(a\leq x_1<b,x\in S^2) &\ = \  \frac{\int_a^b\frac{1}{\sqrt{1-x_1^2}}S_{1}(\sqrt{1-x_1^2}) dx_1}{S_2(1)}\nonumber\\
				&\ = \  \frac{\int_a^b2\pi dx_1}{4\pi }\nonumber\\
				&\ = \  \frac{b-a}{2},
\end{align}
which implies \begin{align}
{\rm Prob}(a\leq S_B(x_1)<b,x\in S^2) &\ = \  \frac{b-a}{B-1}.
\end{align}

The leading digit distribution on $S^2$ is uniform, with respect to any base, which is akin to the fact that equal width slices of a spherical loaf contain the same amount of crust. Our main theorem is an asymptotic result. In a sense, the digit distribution is found by applying Stirling's formula and integrating the standard Gaussian.

\begin{proof}[Proof of Lemma \ref{WTerf}]
Let $a,b\in \R$ satisfy $1\leq a<b < B$. Recall, from the above derivation (see \eqref{eq:whatwaseq1}) that ${\rm Prob}(a<S_B(x_1)<b,x\in S^n)$ equals \be \frac{2}{\sqrt{\pi}}\frac{\Gamma(n/2+1/2)}{\Gamma(n/2)}\sum_{i=1}^\infty \int_{a*B^{-i}}^{b*B^{-i}}(1-x^2)^{n/2-1}\ dx.\ee
By Stirling's approximation \be \frac{\Gamma(n/2+1/2)}{\Gamma(n/2)} \ = \  \sqrt{\frac{n}{2}}+O(1).\ee Using this with the substitution $x=y\sqrt{2/n}$, $dx = dy\sqrt{2/n}$ in the integrand yields, for $n$ sufficiently large and $x \in S^n$, that
\begin{eqnarray}
{\rm Prob}(a\leq S_B(x_1) < b) & \ = \  &\left(\sqrt{\frac{n}{2}}+O(1)\right)\frac{2}{\sqrt{\pi}}\sum_{i=1}^\infty \int_{\sqrt{\frac{n}{2}}\frac{a}{B^i}}^{\sqrt{\frac{n}{2}}\frac{b}{B^i}}\left(1-\frac{y^2}{n/2}\right)^{n/2-1}\sqrt{\frac{2}{n}}\ dy\nonumber\\
& \ = \  &\left(1+O\left(\frac{1}{\sqrt{n}}\right)\right)\frac{2}{\sqrt{\pi}}\sum_{i=1}^\infty \int_{\sqrt{\frac{n}{2}}\frac{a}{B^i}}^{\sqrt{\frac{n}{2}}\frac{b}{B^i}}\left(1-\frac{y^2}{n/2}\right)^{n/2-1} dy\nonumber\\
& \approx &\left(1+O\left(\frac{1}{\sqrt{n}}\right)\right)\frac{2}{\sqrt{\pi}}\sum_{i=1}^\infty \int_{\sqrt{\frac{n}{2}}\frac{a}{B^i}}^{\sqrt{\frac{n}{2}}\frac{b}{B^i}}e^{-y^2}\ dy.
\end{eqnarray}
\end{proof}

Lemma \ref{WTerf} can be generalized to many components. Pick a point at random on the unit sphere $S^n\subset\R^{n+1}$, and consider the first $k$ components $x_1,\ldots,x_k$ $(k<n+1)$. We are interested in the joint distribution of leading digits that appear in the first $k$ components. Similar to the analysis above, for a point $(a_1,a_2,\ldots,a_k)$ in the open unit disk $D^{k}$, notice that the other $n-k+1$ components lie in a $n-k$ sphere of radius $\sqrt{1-a_1^2-\cdots-a_k^2}$. Exploiting the rotational symmetry in the last $n-k+1$ components, we may parametrize the surface element $dS_n$ of $S^n$ by $D^{k}$ as
\begin{eqnarray} dS_n(x_1,\ldots,x_k)  & \ = \ & S_{n-k}(\sqrt{1-x_1^2-\cdots -x_k^2})\ dS_k(x_1,\ldots,x_k)\nonumber\\
dS_n(x_1,\ldots,x_k)  & \ = \ & S_{n-k}(\sqrt{1-x_1^2-\cdots -x_k^2})\frac{1}{\sqrt{1-x_1^2-\cdots -x_k^2}}\ dx_1\cdots dx_k.\ \end{eqnarray}

\begin{lem}
\label{WTerf2}
Fix an integer $k>0$, and let $a_1,b_1,\ldots,a_k,b_k \in \R$ satisfy $0\leq |a_i|<|b_i| < 1$, $1\leq i\leq k$. For $n>k$ sufficiently large,
\be \label{equ:mulicompspheres} {\rm Prob}(|a_1|\leq |x_1| < |b_1|,\ldots,|a_k|\leq |x_k| < |b_k|, x\in S^n)\ee
is well-approximated by
\be \label{equ:mulicompgaussian}   \prod_{i=1}^k\frac{2}{\sqrt{\pi}}\int_{a_i\sqrt{\frac{n}{2}}}^{b_i\sqrt{\frac{n}{2}}}e^{-x^2}\ dx\ee
in the sense that the difference between \eqref{equ:mulicompspheres} and \eqref{equ:mulicompgaussian} tends to zero as $n\to\infty$.
\end{lem}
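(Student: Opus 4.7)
The plan is to follow the strategy of the proof of Lemma \ref{WTerf}, but with the $1$-dimensional sphere element replaced by the $k$-dimensional slab parametrization given just before the lemma statement. First I would write out ${\rm Prob}(|a_1|\leq |x_1| < |b_1|,\ldots,|a_k|\leq |x_k| < |b_k|, x\in S^n)$ by integrating $dS_n/S_n(1)$ over the product box, then using the symmetry $x_i\mapsto -x_i$ in each coordinate to replace each absolute value region by its positive counterpart at the cost of an overall factor of $2^k$. Substituting the explicit formulas $S_m(r)=\frac{2\pi^{(m+1)/2}r^m}{\Gamma((m+1)/2)}$ (derived from $S_m(r)=V_m'(r)$ and the volume formula already recorded in the paper) for both $S_{n-k}(\sqrt{1-s^2})$ and $S_n(1)$, where $s^2=x_1^2+\cdots+x_k^2$, the probability becomes
\begin{equation*}
\frac{2^k\,\Gamma((n+1)/2)}{\pi^{k/2}\,\Gamma((n-k+1)/2)}\int_{|a_1|}^{|b_1|}\!\cdots\!\int_{|a_k|}^{|b_k|}(1-x_1^2-\cdots-x_k^2)^{(n-k-1)/2}\,dx_1\cdots dx_k.
\end{equation*}

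Next I would apply Stirling's formula to the Gamma ratio, obtaining $\Gamma((n+1)/2)/\Gamma((n-k+1)/2)=(n/2)^{k/2}(1+O(1/n))$ for fixed $k$, and then perform the rescaling $x_i=y_i\sqrt{2/n}$ (so $dx_1\cdots dx_k=(2/n)^{k/2}dy_1\cdots dy_k$), which exactly cancels the $(n/2)^{k/2}$ from Stirling and sends the integration box to $\prod_{i=1}^{k}[|a_i|\sqrt{n/2},|b_i|\sqrt{n/2}]$. After these substitutions the probability equals
\begin{equation*}
\frac{2^k}{\pi^{k/2}}\bigl(1+O(1/\sqrt{n})\bigr)\int_{|a_1|\sqrt{n/2}}^{|b_1|\sqrt{n/2}}\!\cdots\!\int_{|a_k|\sqrt{n/2}}^{|b_k|\sqrt{n/2}}\left(1-\frac{y_1^2+\cdots+y_k^2}{n/2}\right)^{(n-k-1)/2}\,dy_1\cdots dy_k.
\end{equation*}

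The Gaussian limit then falls out: pointwise $(1-2(y_1^2+\cdots+y_k^2)/n)^{(n-k-1)/2}\to e^{-(y_1^2+\cdots+y_k^2)}=\prod_i e^{-y_i^2}$, and since both the integrand (in the limit) and the region are products over the $k$ coordinates, the $k$-fold integral factors into $\prod_{i=1}^{k}\int_{|a_i|\sqrt{n/2}}^{|b_i|\sqrt{n/2}}e^{-y_i^2}dy_i$. Combined with the prefactor $2^k/\pi^{k/2}=\prod_{i=1}^{k}(2/\sqrt{\pi})$, this reproduces exactly \eqref{equ:mulicompgaussian}.

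The main obstacle is justifying the passage to the limit inside the integral uniformly, since the rescaled integration region grows with $n$ while the pointwise bound $|a_i|,|b_i|<1$ guarantees $2(y_1^2+\cdots+y_k^2)/n<(b_1^2+\cdots+b_k^2)<k$. I would handle this by a standard uniform comparison: using $\log(1-u)=-u+O(u^2)$ for $u$ bounded away from $1$, one has $\left(1-\tfrac{2|y|^2}{n}\right)^{(n-k-1)/2}=e^{-|y|^2}\bigl(1+O(|y|^4/n)\bigr)$ with constants depending only on $\max_i b_i^2<1$; inserting this and separating the main term from the remainder, the remainder integrates to $O(1/n)$ against a Gaussian majorant, while the main term gives the product of error functions. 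The condition $b_i<1$ for all $i$ is exactly what keeps the base $1-s^2$ uniformly bounded below zero on the relevant box and makes this estimate legitimate, completing the proof.
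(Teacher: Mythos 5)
Your argument follows the paper's own proof essentially step for step: symmetry reduction with the factor $2^k$, the disintegration formula for $dS_n$ in terms of $S_{n-k}(\sqrt{1-s^2})$, expressing the prefactor as $\left(\frac{2}{\sqrt\pi}\right)^k\Gamma(n/2+1/2)/\Gamma(n/2-k/2+1/2)$, Stirling to extract $(n/2)^{k/2}$, and the rescaling $x_i = y_i\sqrt{2/n}$. The only difference is that you spell out the uniform comparison $\left(1-\tfrac{2|y|^2}{n}\right)^{(n-k-1)/2}=e^{-|y|^2}\bigl(1+O(|y|^4/n)\bigr)$ justifying the passage to the Gaussian limit, a step the paper leaves implicit with ``the substitutions ... complete the proof.''
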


\begin{proof}
Similar to Lemma \ref{WTerf}, we need only worry about when $a_i,b_i>0$. By symmetry and substitution
\begin{align}
	&{\rm Prob}(a_1<x_1<b_1,\ldots,a_k<x_k<b_k,x\in S^n)\nonumber\\
	&\ = \ \displaystyle2^k\int_{a_1<x_1<b_1,\ldots,a_k<x_k<b_k}dS(x_1,\ldots,x_k)\nonumber\\
	 &\ = \  \frac{2^k}{S_n(1)}\int_{a_1}^{b_1}\cdots\int_{a_k}^{b_k}S_{n-k}(\sqrt{1-x_1^2-\cdots -x_k^2})\frac{1}{\sqrt{1-x_1^2-\cdots -x_k^2}}\ dx_1\cdots dx_k\nonumber\\
	&\ = \ \frac{2^k(n-k+1)}{(n+1)V_n(1)}\int_{a_1}^{b_1}\cdots\int_{a_k}^{b_k}V_{n-k}(\sqrt{1-x_1^2-\cdots -x_k^2})\frac{1}{\sqrt{1-x_1^2-\cdots -x_k^2}}\ dx_1\cdots dx_k\nonumber\\
	&\ = \ \left(\frac{2}{\sqrt\pi}\right)^k\frac{\Gamma(n/2+1/2)}{\Gamma(n/2-k/2+1/2)}\int_{a_1}^{b_1}\cdots\int_{a_k}^{b_k}(1-x_1^2-\cdots -x_k^2)^{(n-k-1)/2}\ dx_1\cdots dx_k.
\end{align}
Stirling's approximation \be \frac{\Gamma(n/2+1/2)}{\Gamma(n/2-k/2+1/2)} \ = \  \left(\frac{n}{2}\right)^{k/2}\!\!\! +\ O(n^{k/2-\epsilon})\ee and the substitutions $x_i = y_i / \sqrt{n/2}$ complete the proof.
\end{proof}

\begin{cor}
Fix an integer $k>0$. For any base $B>1$, and $a_1,b_1,\ldots,a_k,b_k \in \R$ satisfying $1\leq a_i< b_i < B$, $1\leq i\leq k$, we have for $n>k$ sufficiently large
\be \label{equ:mulicompspheresdig} {\rm Prob}(a_1\leq S_B(x_1) < b_1,\ldots, a_k \leq S_B(x_k) < b_k, x\in S^n)\ee
is well-approximated by
\be \label{equ:mulicompgaussiandig} \prod_{j=1}^k\sum_{i=1}^{\infty} \frac{2}{\sqrt{\pi}}\displaystyle\int_{\sqrt{\frac{n}{2}}\frac{a_j}{B^i}}^{\sqrt{\frac{n}{2}}\frac{b_j}{B^i}}e^{-x^2}\ dx\ee
in the sense that the difference between \eqref{equ:mulicompspheresdig} and \eqref{equ:mulicompgaussiandig} tends to zero as $n\to\infty$.
In particular, the joint leading digit distribution of the first $k$ components is asymptotically periodic in $n$, with period $B^2$, tending to one of the $B^2-1$ limiting distributions \be \prod_{j=1}^kF_n(x_j)\ = \ \prod_{j=1}^k\sum_{i=-\infty}^{\infty}\left( \erf\left(\sqrt{\frac{n}{2}}\frac{x_j}{B^i}\right)- \erf\left(\sqrt{\frac{n}{2}}\frac{1}{B^i}\right)\right)\ \ 1\leq n < B^2.\ee
\end{cor}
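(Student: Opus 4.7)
The plan is to reduce the significand statement to Lemma \ref{WTerf2} by decomposing the event $\{a_j \le S_B(x_j) < b_j\}$ into disjoint scales, and then exploit the product structure that Lemma \ref{WTerf2} already delivers. Concretely, for each coordinate $j$, observe that $|x_j| \le 1$ on $S^n$, so
\begin{equation}
\{a_j \le S_B(x_j) < b_j\} \;=\; \bigsqcup_{i_j \ge 1} \left\{\frac{a_j}{B^{i_j}} \le |x_j| < \frac{b_j}{B^{i_j}}\right\}
\end{equation}
up to a null set (the boundary $|x_j|=1$ and the origin have measure zero with respect to the surface measure). The joint event is therefore a disjoint union over multi-indices $(i_1,\dots,i_k)\in\N^k$.

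Next I would apply Lemma \ref{WTerf2} with $a_j$ replaced by $a_j/B^{i_j}$ and $b_j$ replaced by $b_j/B^{i_j}$ inside each cell. Summing over $(i_1,\dots,i_k)$ and using the fact that the approximating expression is a product
\begin{equation}
\prod_{j=1}^{k}\frac{2}{\sqrt{\pi}}\int_{\sqrt{n/2}\,a_j/B^{i_j}}^{\sqrt{n/2}\,b_j/B^{i_j}} e^{-x^2}\, dx,
\end{equation}
the multi-sum factors as $\prod_{j=1}^{k}\sum_{i\ge 1}(\cdots)$, which is exactly the claimed expression \eqref{equ:mulicompgaussiandig}. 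The error-term bookkeeping is where one has to be careful: Lemma \ref{WTerf2} produces an additive error $o(1)$ for each fixed cell, but we are summing over infinitely many cells and taking products of $k$ such sums. I would handle this by splitting the index range at some $i_\star = i_\star(n)$ growing slowly with $n$: for $i_j \le i_\star$ the Stirling/Gaussian replacement incurs a uniform $O(n^{-1/2})$ relative error, while for $i_j > i_\star$ both the true spherical probability and the Gaussian approximation are bounded by the Gaussian tail $\int_{\sqrt{n/2}/B^{i_\star}}^{\infty} e^{-x^2}\,dx$, which is super-exponentially small. Choosing $i_\star \sim \log_B\sqrt{n}$ (say) makes both pieces tend to zero as $n\to\infty$.

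Finally, for the asymptotic periodicity statement, I would substitute $n \mapsto nB^{2\ell}$ and notice that $\sqrt{nB^{2\ell}/2}\cdot B^{-i} = \sqrt{n/2}\cdot B^{\ell - i}$, so the change of variables $i \mapsto i - \ell$ turns $\sum_{i=1}^{\infty}$ into $\sum_{i=1-\ell}^{\infty}$. As in Remark \ref{rek:distrsphereslimitperiodic}, the terms added as $\ell \to \infty$ (those with $i \le 0$) become negligible in the original sum because the erf-difference is super-exponentially small there, but they are recovered as $\ell$ grows, in the limit giving $\sum_{i=-\infty}^{\infty}$. Since $n$ and $nB^2$ then yield the same limiting law $\prod_j F_n(x_j)$, the joint leading-digit distribution is asymptotically periodic in $n$ with period $B^2$, producing the $B^2-1$ limits indexed by $1\le n < B^2$.

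The main obstacle is the uniformity of the Stirling approximation and of the $(1-y^2/(n/2))^{n/2-1}\to e^{-y^2}$ convergence across the doubly-infinite sum: one needs the approximation to be good enough that, after summing over $k$ scales to the $k$-th power, the accumulated error still vanishes. The splitting at $i_\star$ above, combined with the Gaussian tail bound, is the mechanism I would use to control this.
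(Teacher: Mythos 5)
Your proposal is correct and takes the same route the paper implicitly relies on: the corollary is stated immediately after Lemma \ref{WTerf2} with no written proof, and the intended derivation is exactly your scale decomposition $\{a_j \le S_B(x_j) < b_j\} = \bigsqcup_{i_j\ge 1}\{a_j/B^{i_j}\le |x_j| < b_j/B^{i_j}\}$ (valid up to a null set since $|x_j|\le 1$ on $S^n$), followed by applying Lemma \ref{WTerf2} cell-by-cell and factoring the resulting $k$-fold multi-sum into a product of single sums. Your periodicity argument is likewise the one in Remark \ref{rek:distrsphereslimitperiodic} carried over coordinatewise: the substitution $n\mapsto nB^{2\ell}$ reindexes $\sum_{i\ge 1}$ to $\sum_{i\ge 1-\ell}$, and the tail terms are negligible, yielding the doubly-infinite sum $\prod_j F_n(x_j)$ with $F_{nB^2}=F_n$. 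Where you go beyond the paper is in naming the error-bookkeeping issue (summing an $o(1)$ estimate over infinitely many cells and then taking a $k$-fold product) and proposing the cutoff at $i_\star\sim\log_B\sqrt{n}$; the paper's own Lemma \ref{WTerf2} proof is itself stated at the level of ``well-approximated,'' so this extra care is a genuine strengthening rather than a deviation. Two small points worth tightening if you wrote this out in full: for the finite range $i_j\le i_\star$ you should note the relative-error bound sums because the cell probabilities themselves sum to at most one, and you should check that the convergence $(1-y^2/(n/2))^{(n-k-1)/2}\to e^{-y^2}$ is uniform on the relevant ranges (domination by a fixed Gaussian, e.g. via $(1-t)^m\le e^{-mt}$, suffices).
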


\section{Conclusions and Future Work}\label{sec:conclusion}

Our results above can serve as a means for detecting underlying symmetries of a physical system. For example, imagine we are trying to construct matrices from one of the classical compact groups according to Haar measure (see \cite{Mez} for a description of how to do this). We can use our digit laws as a test of whether or not we are simulating the matrices correctly. It would be interesting to generalize the arguments above to other groups of matrices, including those over fields other than the reals.

\appendix

\section{Linear Lie groups}\label{app:llg}

A Lie group $G\subset {\rm GL}(V)$ is a group equipped with a differentiable structure such that the binary operation $G\times G\to G$ is differentiable. The Lie algebra $L(G)$ may be naturally identified with the tangent space $T_e(G)$ to the identity. For a direction $X\in L(G)$ there is a unique one parameter subgroup $\exp(tX)$, $t\in\R$, in the direction of $X$ and the map $\exp: L(G)\to G$ is a local diffeomorphism. Let $E_{ij}$ be the $n\times n$ matrix with $1$ in the $(i,j)$ entry and zeroes elsewhere.

The groups in this paper are the following.

\begin{itemize}
\item The general linear group ${\rm GL}_n(\R)$ of matrices of nonzero determinant and its Lie algebra $\mathfrak{gl}_n(\R)$ of all $n\times n$ matrices.

\item The special linear group: ${\rm SL}_n(\R) = \{A\in {\rm GL}_n(V)\mid \det A =1\}$ and its Lie algebra $\mathfrak{sl}_n(\R) = \{X\in gl_n(\R)\mid {\rm tr} X =0\}$ of traceless matrices.

\item The space of diagonal matrices ${\rm D}\subset GL_n(\R)$ with nonzero diagonal entries and its Lie algebra $\fd$ of diagonal matrices with entries in $\R$.

\item The space of diagonal matrices determinant ${\rm D}_1(\R)\subset GL_n(\R)$ with nonzero diagonal entries and its Lie algebra $\fd_1$ of traceless matrices with entries in $\R$.

\item The space of upper triangular matrices ${\rm U}(\R)\subset GL_n(\R)$ with nonzero diagonal entries and its Lie algebra $\fu$ of upper triangular matrices with entries in $\R$.

\item The space of lower triangular matrices ${\rm L}(\R)$ with nonzero entries and its Lie algebra $\fl$ of lower triangular matrices with entries in $\R$.
\item Note that
\be \fl \ = \  \spans_{\ \!\! \R}(\{ {E_{i,j}}\}_{i>j}),\ \ \ \fu \ = \  \spans_{\ \!\! \R}(\{ {E_{i,j}}\}_{i<j}), \ \ \ \fd_1\ = \ \spans_{\ \!\! \R}(E_{i,i}-E_{i+1,i+1})_{1\leq i \leq n-1}.\ee
where $E_{i,j}$ is the $n\times n$ matrix with $1$ in the $(i,j)$ position and zeroes elsewhere.

\item The orthogonal group: ${\rm O}(n)(\R) = \{A\in GL_n(\R)\mid A^TA = I\}$ and its lie algebra $\mathfrak{o}_n(\R) =\{X\in M_n(\R)\mid F^T+F=0\}$ of skew symmetric matrices

\item The unitary group $U_n(\C)=\{U\in {\rm GL}_n(\C)\mid U^*U=I\}$ and its lie algebra $\mathfrak{u}_n= \{W\in M_n(\C)\mid W+ W^T= 0\}$.

\end{itemize}
The complex lie groups ${\rm GL}_n(\C)$, ${\rm O}_n(\C)$, ${\rm U}(\C)$, ${\rm L}(\C)$, ${\rm D}(\C)$, ${\rm D_1}(\C)$ are defined analogously.

\section{Haar measure on ${\rm SL}_2(\R)$ is Benford in each component}\label{sec:appgeomcone}.

The goal of this section is to provide a geometric proof of Theorem \ref{SLnBenford} in two dimensions. We start with a useful, classical result.

\begin{lem}
The area of the hyperbolic cone \be\{(t,t/x): 0\leq t\leq 1,\ 0< a\leq x\leq b\}\ee is equal to $\log(b/a)$.
\end{lem}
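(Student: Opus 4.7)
The plan is to compute the area via the change-of-variables formula. I interpret the hyperbolic cone as the region swept out by the radial segment from the origin to $(x, 1/x)$ on the hyperbola $xy = 1$ as $x$ varies over $[a, b]$; equivalently, the image of the parametrization $\phi(t, x) = (tx, t/x)$ on $[0, 1] \times [a, b]$, where for each fixed $x$ the parameter $t$ traces the radius out to the point on the hyperbola. First I would check that $\phi$ is a diffeomorphism onto the cone: if $(u, v) = (tx, t/x)$ then $uv = t^2$ and $u/v = x^2$, so the preimage $(t, x) = (\sqrt{uv}, \sqrt{u/v})$ is uniquely determined in the positive quadrant, ruling out any multiplicity.

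Next I would compute the Jacobian determinant,
\[
\det D\phi(t, x) \;=\; \det \begin{pmatrix} x & t \\ 1/x & -t/x^2 \end{pmatrix} \;=\; -\frac{2t}{x},
\]
and integrate its absolute value over the parameter rectangle. Since $\int_0^1 2t \, dt = 1$, the area reduces to
\[
\int_a^b \int_0^1 \frac{2t}{x} \, dt \, dx \;=\; \int_a^b \frac{dx}{x} \;=\; \log(b/a),
\]
as claimed.

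As a sanity check, Green's theorem applied to the boundary yields $\log(b/a)$ just as quickly: along the two radial edges the form $x \, dy - y \, dx$ vanishes identically (it is zero along any line through the origin), while on the arc $y = 1/x$ one has $x \, dy - y \, dx = -2\, dx/x$, so $\tfrac{1}{2}\oint(x\, dy - y \, dx)$ collapses to $\log(b/a)$ after fixing the orientation. The main obstacle is purely interpretive --- one must correctly identify the cone and verify that the parametrization is an honest bijection onto it; once that is done, the computation reduces to the single integral $\int_a^b dx/x$, and the appearance of exactly the Haar density $dx/x$ on the diagonal subgroup of ${\rm SL}_2(\R)$ is what makes this lemma the geometric bridge to the Benford conclusion in the two-dimensional case of Theorem \ref{SLnBenford}.
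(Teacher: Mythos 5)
Your proof is correct, and it takes a genuinely different route from the paper. The paper's argument is a cut-and-paste one: the sector equals the region under the graph of $1/x$ over $[a,b]$ (area $\log(b/a)$), plus the triangle with vertices $(0,0)$, $(a,0)$, $(a,1/a)$, minus the triangle with vertices $(0,0)$, $(b,0)$, $(b,1/b)$, and the two triangles both have area $1/2$ so they cancel. You instead parametrize the sector explicitly and compute a Jacobian, then corroborate with Green's theorem. The change-of-variables route is perhaps more robust (it doesn't depend on the picture of which triangles to add and subtract, and generalizes more readily to related sectors), while the paper's is more in the classical ``quadrature of the hyperbola'' spirit and needs no calculus beyond $\int dx/x$.

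One point worth flagging explicitly: as printed, the lemma's set $\{(t,t/x): 0\leq t\leq 1,\ a\leq x\leq b\}$ is a triangle with vertices $(0,0)$, $(1,1/b)$, $(1,1/a)$ (set $u=t$, $v=t/x$, so the constraints become $0\le u\le 1$ and $u/b\le v\le u/a$), whose area is $\tfrac12\bigl(\tfrac1a-\tfrac1b\bigr)$, not $\log(b/a)$. You implicitly corrected this to the intended region --- the sector $\{(tx,t/x): 0\le t\le 1,\ a\le x\le b\}$ bounded by the two radii to $(a,1/a)$ and $(b,1/b)$ and the arc of $xy=1$ --- which is what the paper's own proof and the subsequent ${\rm SL}_2(\R)$ argument actually use. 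It would strengthen the write-up to state that correction outright rather than slip it in under ``equivalently.'' With that clarification, your Jacobian $\det D\phi=-2t/x$, the injectivity check via $(t,x)=(\sqrt{uv},\sqrt{u/v})$, and the reduction to $\int_a^b dx/x$ are all right, and the Green's theorem cross-check (radial edges contribute nothing, the hyperbolic arc contributes $-2\,dx/x$) is a nice independent confirmation.
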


\begin{proof}
The region under the curve $1/x$ has area $\log(b)-\log(a) = \log(b/a)$, and one can form the sector from this region by first attaching the triangle with corners $(0,0),(a,0),(a,1/a)$ and then removing the triangle with corners $(0,0),(b,0),(b,1/b)$. Both triangles have area $1/2$.
\end{proof}

Treating ${\rm SL}_2(\R)$ as the graph of $d=(1-ac)/b$, construct from $A\subset {\rm SL}_n(\R)$ the cone on $A$ to the origin. Since the ${\rm SL}_2(\R)$ action preserves volume, the Haar measure on ${\rm SL}_n(\R)$ equals (up to a scalar) the volume of the cone on $A\subset {\rm SL}_n(\R)$. This observation forms the basis of the proof.

\begin{thm} The $(1,1)$ component of ${\rm SL}_2(\R)$ with Haar measure is $B$-Benford. \end{thm}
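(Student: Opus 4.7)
The plan is to combine the two tools of this appendix — the cone correspondence between Haar measure on ${\rm SL}_2(\R)$ and Lebesgue volume in $M_2(\R) \cong \R^4$, together with the hyperbolic-sector lemma above — to evaluate the tube averages in Definition \ref{def:diglawnoncpt}. Take the direction $X = H := \operatorname{diag}(1,-1) \in \mathfrak{sl}_2(\R)$, so that $\exp(tH) = \operatorname{diag}(e^t, e^{-t})$ has $(1,1)$-entry $e^t$, and the significand of the $(1,1)$-entry along $\{\exp(tH) : t \in [0, k\log B)\}$ traces each window $[B^\ell, sB^\ell)$ exactly once. Because Haar on ${\rm SL}_2(\R)$ is bi-invariant, the simplified ratio \eqref{equ:Haardef3} applies, and it suffices to prove
\[ \mu\bigl(\exp(U_\epsilon([0,L)H))\bigr) \;=\; L\cdot c(\epsilon), \]
with $c(\epsilon)$ independent of $L$ and of position along the one-parameter subgroup; the ratio will then collapse to $(k\log s)/(k\log B) = \log_B s$, which is the $B$-Benford law.

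To establish this linearity in $L$, I would replace Haar by the Lebesgue volume of the $4$-dimensional cone $C(\cdot)\subset M_2(\R)$ and parametrize the cone over the tube via $(r, s, y_1, y_2) \mapsto r\exp(sH)\exp(y_1 E_{12} + y_2 E_{21})$ on $[0,1]\times[0,L]\times\{y_1^2+y_2^2<\epsilon^2\}$. The volume factors into two pieces. The diagonal slice $y_1=y_2=0$ is the two-dimensional set $\{(re^s, re^{-s}) : r \in [0,1],\ s \in [0, L]\}$, which in $(u,v)$-coordinates becomes a hyperbolic sector bounded by $uv=1$, $u=v$, and $u=e^{2L}v$; via the substitution $u=tx,\ v=t/x$ with $t=r,\ x=e^s\in[1,e^L]$, the hyperbolic-sector lemma identifies its area as $\log(e^L/1)=L$. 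The transverse $(y_1,y_2)$-integration contributes an $s$-independent factor of size $O(\epsilon^2)$, reflecting left-invariance of Haar or, equivalently, the fact that $\operatorname{Ad}(\exp(sH))$ acts on $\operatorname{span}(E_{12}, E_{21})$ with determinant one.

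The main obstacle is the bookkeeping in the transverse factorization: verifying that the Jacobian of the chart above is indeed $s$-independent (up to the universal factor coming from the hyperbolic sector), and that the $\epsilon$-disk in $\mathfrak{sl}_2(\R)$ really maps to a transverse slice of the cone in $\R^4$ with the expected cross-sectional area. Because $\exp$ is only a local diffeomorphism, one restricts to $\epsilon$ small enough that exponential coordinates are valid on the tube, which is automatic in the $\epsilon\to 0$ limit of Definition \ref{def:diglawnoncpt}. Once this is done, the hyperbolic-sector lemma delivers the logarithm in $L$ from the diagonal slice, the transverse piece contributes the position-independent $c(\epsilon)$, and the $B$-Benford law for the $(1,1)$-entry of ${\rm SL}_2(\R)$ follows from \eqref{equ:Haardef3}.
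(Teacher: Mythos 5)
Your proposal is correct and reaches the same conclusion as the paper, and it relies on the same central device --- measuring Haar via the Lebesgue volume of the cone $C(\cdot)\subset M_2(\R)$ to the origin --- but the execution is genuinely different. The paper parametrizes ${\rm SL}_2(\R)$ as the graph of $d=(1-bc)/a$ over the rectangular box $[1,x)\times[-\epsilon,\epsilon]^2$ in the matrix-entry coordinates $(a,b,c)$, then relates the cone volume $\lambda(C(\mathrm{graph}(d)))$ to the volume $\lambda(S)$ of the solid under the graph by appending and removing six pyramidal regions over the faces of $S$, and extracts the $\log(x)$ factor after observing that the surviving integrals all contain $\int_1^x da/a$. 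You instead work in the exponential coordinates $(r,s,y_1,y_2)\mapsto r\exp(sH)\exp(y_1E_{12}+y_2E_{21})$ that Definition \ref{def:diglawnoncpt} actually calls for, compute the Jacobian, and separate the $(a_{11},a_{22})$-plane contribution (where the hyperbolic-sector Lemma gives the area $L$) from the transverse $(y_1,y_2)$-contribution (where $\det\Ad(\exp(sH))\vert_{\spans(E_{12},E_{21})}=1$ kills the $s$-dependence). In this sense you actually \emph{use} the hyperbolic-sector lemma, whereas the paper states it as motivation but proves the theorem by the pyramid decomposition. Your route is tighter to the tube definition and makes the Lie-theoretic mechanism visible --- the $\Ad$-invariance that makes $c(\epsilon)$ position-independent is exactly what drives the $\fl,\fu,\fd_1$ computation in the proof of Theorem \ref{SLnBenford} --- while the paper's route is more elementary-geometric and does not invoke $\Ad$.

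One phrasing in your write-up is not quite accurate and should be fixed before the argument is written out in full. You say ``the volume factors into two pieces,'' suggesting $\lambda(C)\approx(\text{area of diagonal slice})\cdot(\text{transverse area})$. In fact the Jacobian at $y=0$ is $2r^3$: a factor $2r$ from the diagonal slice and a factor $r^2$ from the transverse directions (since $\partial_{y_1}F=re^{s}E_{12}$ and $\partial_{y_2}F=re^{-s}E_{21}$). The transverse piece therefore does depend on $r$, so the computation is an iterated integral rather than a literal product; the volume is $\approx \pi\epsilon^2\int_0^L\!\int_0^1 2r^3\,dr\,ds = L\cdot\tfrac{\pi\epsilon^2}{2}$, not $L\cdot\pi\epsilon^2$. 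This is harmless --- the only property used in \eqref{equ:Haardef3} is that the answer is $L$ times an $L$-independent constant, which still holds --- but the ``factorization'' language overstates what happens, and the clean appeal to the hyperbolic-sector lemma (area exactly $L$) is really just recording the $\int 2r\,dr\,ds$ part of that iterated integral.
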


\begin{proof}
Write ${\rm SL}_2(\R)$ as \be \left\{\begin{bmatrix} a & b \\ c & d\end{bmatrix} \ : \  ad-bc=1 \right\}.\ee We give a series of statements that simplify the argument but create no loss of generality. Clearly $dg$ is $B$-Benford in the $(1,1)$ component if and only if $c\ dg$ is is $B$-Benford in the $(1,1)$ component, so we take the Haar measure on ${\rm SL}_2(\R)$ that was constructed earlier. Let $a_{11}=a$; notice that $a=0$ is a zero measure subset of $({\rm SL}_2(\R),\mu)$, so we treat ${\rm SL}_2(\R)$ as the graph of the function $d = (bc-1)/a$. By symmetry it suffices to prove the theorem when our sequence of compact sets $K_i$ lie in ${\rm SL}_2(\R)^+$ when $K={\rm graph}(d)$ (with $d=(1-bc)/a$), defined over a rectangular domain $D = [1,x)\times[-\epsilon,\epsilon]\times[-\epsilon,\epsilon]$.

Recall that $\mu(K)=\mu({\rm graph}(d))=\lambda(C({\rm graph}(d)))$ is the volume of the cone consisting of all line segments from $O$ to the graph of $d$. Consider the solid $S := S({\rm graph}(d))$ bounded below the graph of $d$ whose volume is \be \lambda(S) \ = \  \int\!\! \int\!\! \int_D \left(\frac{1-bc}{a}\right) \ da\ db\ dc.\ee

We wish to relate $\lambda(C({\rm graph}(d)))$ to $\lambda(S({\rm graph}(d)))$. By our restriction to positive coordinates, we see that $d$ is decreasing along each ray emanating from the origin in a direction of $D$. As we are assuming ${\rm graph}(d)>0$ on $D$, $\lambda(C({\rm graph}(d))$ can be found by appending to $S$ the three pyramidal regions whose bases are the (3-dimensional) faces of $S$, given by
\be S\cap \{a=1\},S\cap \{b=-\epsilon\},S\cap \{c =-\epsilon\},\ee
then removing the pyramids whose bases are the faces
\be S\cap \{a=x\},S\cap \{b=-\epsilon \},S\cap \{c =\epsilon \}.\ee
The apex for all $6$ pyramids is the origin. Thus\begin{eqnarray}
\lambda (C({\rm graph}(d))) & \ = \ &  \lambda(S) + \lambda(C(S\cap \{a=1\}))-\lambda(C(S\cap \{a=x\})) \nonumber\\
& & \ \ \ \ \ \ \ \ \ + \ \lambda(C(S\cap \{b=-\epsilon\})) - \lambda(C(S\cap \{b=\epsilon\})) \nonumber\\
& & \ \ \ \ \ \ \ \ \ + \ \lambda(C(S\cap \{c =-\epsilon\}))  - \lambda(C(S\cap \{c =\epsilon\})).
\end{eqnarray}

Recall that the $4$-dimensional volume of a pyramid is $1/4$ the volume of the base time the height, and the volume of the base of each pyramid is simply the double integral over the appropriate slice. Thus
\begin{align}
\lambda(C({\rm graph}(d))) \ = \   &\int\!\! \int\!\! \int_D \frac{1-bc}{a} \ da\ db\ dc \nonumber\\
	 & +\ \frac{1}{4}\int_{-\epsilon}^{\epsilon}\!\int_{-\epsilon}^{\epsilon} \frac{1-bc}{1}\ dc\ db  - \frac{x}{4}\int_{-\epsilon}^{\epsilon}\!\int_{-\epsilon}^{\epsilon} \frac{1-bc}{x}\ dc\ db \nonumber\\
 & -\ \frac{\epsilon}{4} \int_{1}^{x}\!\int_{-\epsilon}^{\epsilon} \frac{1+\epsilon c}{a}\ dc\ da - \frac{\epsilon}{4} \int_{1}^{x}\!\int_{-\epsilon}^{\epsilon} \frac{1-\epsilon c}{a}\ dc\ da \nonumber\\
 & -\ \frac{\epsilon}{4} \int_{1}^{x}\!\int_{-\epsilon}^{\epsilon} \frac{1- b\epsilon}{a}\ db\ da  - \frac{\epsilon}{4} \int_{1}^{x}\!\int_{-\epsilon}^{\epsilon} \frac{1- b\epsilon}{a}\ db\ da.
\end{align}

Notice that the second and third terms cancel, and each integral that remains is separable, with the same limits of integration on $a$.
If we let $F(\epsilon)$ be the quantity
\bea F(\epsilon) & \ = \ &  \int_{-\epsilon}^{\epsilon} \int_{-\epsilon}^{\epsilon} (1-bc)\ db\ dc\nonumber\\
               && \ \ +\ \frac{1}{4}\left( \int_{-\epsilon}^{\epsilon}( -\epsilon(1+\epsilon c) -\epsilon(1-\epsilon c))\ dc\right) \nonumber\\
		 & & \ \ + \ \left( \int_{-\epsilon}^{\epsilon}( -\epsilon(1- b\epsilon) - \epsilon(1- b\epsilon))\ db\right), \eea
then
\be \mu(\exp(U_{\epsilon})([0,x)X))\ =\ \lambda(C({\rm graph}(d))) \  = \  \log(x)F(\epsilon),  \ee
and so
\be {\rm Prob}(S_B(a)<x)\ =\ \frac{\log(x)F(\epsilon)}{\log(B)F(\epsilon)}\  =\ \log_B(x). \ee
\end{proof}

\begin{thm}
\label{thm:glndetBenford}
The leading digit law on the determinants of ${\rm GL}_n(\R)$ is $B$-Benford.
\end{thm}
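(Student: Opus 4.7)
The plan is to reduce Theorem \ref{thm:glndetBenford} to the one-dimensional Benford statement of Theorem \ref{thm:1/xBenford} by propagating the $LUD$ decomposition used in Section \ref{sec:proofSLnBenford} from ${\rm SL}_n(\R)$ up to ${\rm GL}_n(\R)$. First, I would observe that almost every $g\in {\rm GL}_n(\R)$ factors uniquely as $g = l\,u\,d$ with $l\in L$ unipotent lower triangular, $u\in U$ unipotent upper triangular, and $d\in D$ an arbitrary invertible diagonal matrix. The adjoint-action Jacobian computation in the proof of Theorem \ref{SLnBenford} depends only on the diagonal factor, so the same cancellation $|\!\det \Ad((ud)^{-1})_{\fl}|\cdot|\!\det \Ad(d^{-1})_{\fu}|=1$ holds here. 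Consequently the Haar measure on ${\rm GL}_n(\R)$ factors as
$$
d\mu_{{\rm GL}}(g)\;=\;dX\,dY\,\prod_{i=1}^n \frac{da_{ii}}{|a_{ii}|},
$$
where $X\in\fl$, $Y\in\fu$, and the rightmost product is the bi-invariant Haar density on $D$ appearing in the diagonal-group theorem immediately following Theorem \ref{thm:trimatrices1}.

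Second, I would exploit the fact that $L$ and $U$ are unipotent to obtain the identity $\det g = \prod_{i=1}^n a_{ii}$ in these coordinates. Performing the change of variables $(a_{11},\ldots,a_{nn})\mapsto (a_{11},\ldots,a_{n-1,n-1},t)$ with $t=\prod_i a_{ii}$, a short computation gives $da_{nn}/|a_{nn}| = dt/|t|$ when the other diagonal coordinates are held fixed, so the diagonal part of the Haar density refactors as $\bigl(\prod_{i<n}da_{ii}/|a_{ii}|\bigr)\cdot dt/|t|$. Along the scalar one-parameter subgroup $\exp(tX_0)=e^{t/n}I$, for which $\det\exp(tX_0)=e^{t}$ and which supplies a natural unit direction in $\mathfrak{gl}_n(\R)$, the Haar measure on ${\rm GL}_n(\R)$ therefore pushes forward to a measure proportional to $dt/|t|$ on $\R^*$.

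Third, I would apply Definition \ref{def:diglawnoncpt} with $G={\rm GL}_n(\R)$ and this scalar direction: the transverse infinite-mass factors $dX$, $dY$, and $\prod_{i<n}da_{ii}/|a_{ii}|$ appear identically in the numerator and denominator of the averaging ratio and cancel, leaving precisely the one-dimensional limit $\lim_{k\to\infty}\frac{k\int_1^s dt/t}{\int_1^{B^k}dt/t}=\log_B s$ verified in the proof of Theorem \ref{thm:1/xBenford}. By the ${\rm GL}_n(\R)$-symmetry $g\mapsto g'$ with $\det g' = -\det g$, the negative-determinant component contributes identically, so the same Benford law holds on all of ${\rm GL}_n(\R)$.

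The main obstacle is the careful bookkeeping of the noncompact limits in Definition \ref{def:diglawnoncpt}: one must verify that the averaging ratios genuinely factor through the Fubini decomposition above despite the infinite transverse mass, and that the Lie-algebra disks $U_\epsilon$ used to average in the scalar direction translate correctly into rectangular neighborhoods in $LUD$ coordinates via the Baker--Campbell--Hausdorff formula. A compactly supported cutoff in the transverse $\fl$, $\fu$, and first $n-1$ diagonal directions, combined with taking $\epsilon\to 0$ first (so that the common transverse-cutoff factor cancels cleanly between numerator and denominator) and then $k\to\infty$, should make the cancellation rigorous and reduce the determinant statement to Theorem \ref{thm:1/xBenford}.
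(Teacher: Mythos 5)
Your proposal is correct in substance but follows a different (and more computational) route than the paper. The paper proves Theorem \ref{thm:glndetBenford} by exhibiting a direct Lie-group isomorphism $f\colon {\rm GL}_n(\R)^+\to \R^+\times {\rm SL}_n(\R)$ given by $f(g)=(\det g,\ (\det g)^{-1/n}g)$; since Haar measure of a direct product of unimodular groups is the product of the Haar measures, the $\R^+$ factor carries the density $dr/r$ and the Benford law for $\det$ drops out immediately without any further Jacobian work. Your plan instead re-runs the $LUD$ decomposition argument of Theorem \ref{SLnBenford} for the full diagonal group $D$ rather than $D_1$, confirms the adjoint-Jacobian cancellation $|\det\Ad((ud)^{-1})_{\fl}|\,|\det\Ad(d^{-1})_{\fu}|=1$ is unaffected by enlarging $D_1$ to $D$, and then performs a change of variables $t=\prod_i a_{ii}$ on the diagonal factor. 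That all works: the cancellation indeed only depends on the adjoint action of $d$ and not on its determinant, and your computation $da_{nn}/|a_{nn}|=dt/|t|$ with the other $a_{ii}$ held fixed is correct. What you lose relative to the paper is economy: you re-derive the full Haar density rather than quotient it away in one stroke. What you gain is that the same reasoning simultaneously recovers the joint $B$-Benford distribution of all $n$ diagonal entries of ${\rm GL}_n(\R)$, a slightly stronger statement than the determinant alone.

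One presentational point to tighten. You invoke the scalar one-parameter subgroup $\exp(tX_0)=e^{t/n}I$ as the direction along which to apply Definition \ref{def:diglawnoncpt}, but the change of variables you actually perform (fixing $a_{11},\dots,a_{n-1,n-1}$ and varying $t$) corresponds to averaging in the direction $E_{nn}$, not the scalar direction. The two choices give the same $dt/|t|$ density on the determinant coordinate and hence the same Benford conclusion, but the transverse disk $U_\epsilon(X)$ in the definition depends on which $X$ you pick. If you intend the scalar direction $X_0\propto I$, the orthogonal complement in $\mathfrak{gl}_n(\R)$ is exactly $\mathfrak{sl}_n(\R)$, and you are then led naturally to the paper's factorization ${\rm GL}_n(\R)^+\cong\R^+\times{\rm SL}_n(\R)$; if you intend the $E_{nn}$ direction, the transverse slice is not a subgroup but the averaging limit still collapses correctly. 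Either way the proof goes through, but you should pick one and make the transverse-cutoff cancellation match it.
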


\begin{proof}
Let ${\rm GL}_n(\R)^+$ be the group of all invertible $n\times n$ matrices with positive determinant. The map
\be f\colon {\rm GL}_n(\R)^+\to \R^+\times {\rm SL}_n(\R)\ee
given by $f(g) = (\det (g),(\det (g))^{-\frac{1}{n}} g)$ is a Lie isomorphism, allowing for a decomposition of the Haar measure on  ${\rm {\rm GL}}_n(\R)^+$ as follows: there exists a constant $c>0$ such that for any compactly supported function $\phi\in C_c({\rm GL}_n(\R)^+)$
\be \int_{{\rm GL}_n(\R)^+}\phi(g) \det(g)^{-n}\ dg \ = \  c\int_{\R^+}\frac{dr}{r}\int_{{\rm SL}_n(\R)}\phi(ry)d\mu'(y).\ee

As any compact set in ${\rm GL}_n(\R)^+$ can be well approximated by cubes of the form $[-\epsilon,\epsilon]K'$, $K'\in {\rm SL}_n(\R)$ compact, the result follows.

\end{proof}


\ \\


\begin{thebibliography}{BroBrzB} 

\bibitem[AnRoSt]{ARS}
T. Anderson, L. Rolen and R. Stoehr, \emph{Benford's Law for Coefficients of Modular Forms and Partition Functions},
Proc. Am. Math. Soc. \textbf{139} (2011), no. 5, 1533--1541.

\bibitem[BFMT-B]{BFMT-B}
O. Barrett, F. W. K. Firk, S. J. Miller and C. Turnage-Butterbaugh, \emph{From Quantum Systems to $L$-Functions: Pair Correlation Statistics and Beyond}, to appear in Open Problems in Mathematics (editors John Nash  Jr. and Michael Th. Rassias), Springer-Verlag.

\bibitem[B--]{B--}
T. Becker, T. C. Corcoran, A. Greaves-Tunnell, J. R. Iafrate, J. Jing, S. J. Miller, J. D. Porfilio, R. Ronan, J. Samranvedhya and F. Strauch, \emph{Benford's Law and Continuous Dependent Random Variables}, arXiv version. \bburl{http://arxiv.org/pdf/1309.5603}.

\bibitem[Ben]{Ben} \newblock F. Benford, \emph{The law of anomalous numbers}, Proceedings of the American Philosophical Society \textbf{78} (1938), 551-572.

\bibitem[BH1]{BH1}
A. Berger and T. P. Hill, {\em Benford Online Bibliography}, \bburl{http://www.benfordonline.net}.

\bibitem[BH2]{BH2}
A. Berger and T. P. Hill, \emph{A Basic Theory of Benford's Law}, Probab. Surv. \textbf{8} (2011), 1--126.

\bibitem[BH3]{BH3}
A. Berger and T. P. Hill, \emph{An Introduction to Benford's Law}, Princeton University Press, 2015.

\bibitem[Bh]{Bh}
R. N. Bhattacharya, \emph{Speed of convergence of the $n$-fold convolution of a probability measure on a compact group}, Z. Wahrscheinlichkeitstheorie verw. Geb. \textbf{25} (1972), 1--10.

\bibitem[BrDu]{BrDu}
J. Brown and R. Duncan, \emph{Modulo One Uniform Distribution of the Sequence of Logarithms of Certain Recursive Sequences},
Fibonacci Quarterly \textbf{8} (1970), 482--486.

\bibitem[CFKRS]{CFKRS}
J. B. Conrey, D. Farmer, P. Keating, M. Rubinstein and N. Snaith, \emph{Integral moments of $L$-functions}, Proc. London Math. Soc.
(3) \textbf{91} (2005), no. 1, 33--104.

\bibitem[Dia]{Dia} \newblock P. Diaconis, \emph{The distribution of leading digits and uniform distribution mod 1}, Ann. Probab. \textbf{5} (1979), 72-81.

\bibitem[ERSY]{ERSY}
L. Erd\H{o}s, J. A. Ramirez, B. Schlein and H.-T. Yau, \emph{Bulk Universality for Wigner Matrices}, Comm. Pure Appl. Math. \textbf{63} (2010), no. 7, 895--925

\bibitem[ESY]{ESY}
L. Erd\H{o}s, B. Schlein and H.-T. Yau, \emph{Wegner estimate and level repulsion for Wigner random matrices}, Int. Math. Res. Not. IMRN (2010), no. 3, 436--479

\bibitem[FiMil]{FM}
F. W. K. Firk and S. J. Miller, \emph{Nuclei, Primes and the Random Matrix Connection}, Symmetry \textbf{1} (2009), 64--105; doi:10.3390/sym1010064.

\bibitem[Ha]{Ha}
B. Hayes, \emph{The spectrum of Riemannium}, American Scientist \textbf{91} (2003), no. 4, 296--300.

\bibitem[HR]{HR}
E. Hewitt and K. A. Ross, \emph{Abstract Harmonic Analysis, Vol. I. Structure of topological groups, integration theory, group representations} (second edition), Fundamental Principles of Mathematical Sciences \textbf{115}, Springer-Verlag, Berlin--New York, 1979.

\bibitem[Hi1]{Hi1} \newblock T. Hill, \emph{The first-digit phenomenon}, American Scientists \textbf{86}  (1996), 358-363.

\bibitem[Hi2]{Hi2} \newblock T. Hill, \emph{A statistical derivation of the significant-digit law}, Statistical Science \textbf{10} (1996), 354-363.

\bibitem[Hu]{Hu}
W. Hurlimann, \emph{Benford's Law from 1881 to 2006}, \bburl{http://arxiv.org/pdf/math/0607168}.



\bibitem[KaSa1]{KaSa1}
\newblock N. Katz and P. Sarnak, \emph{Random Matrices, Frobenius
Eigenvalues and Monodromy}, AMS Colloquium Publications \textbf{45},
AMS, Providence, $1999$.

\bibitem[KaSa2]{KaSa2}
\newblock N. Katz and P. Sarnak, \emph{Zeros of zeta functions and symmetries},
Bull. AMS \textbf{36}, $1999$, $1-26$.

\bibitem[KeSn1]{KeSn1}
\newblock J. P. Keating and N. C. Snaith, \emph{Random matrix theory
and $\zeta(1/2+it)$},  Comm. Math. Phys.  \textbf{214} (2000),  no.
1, 57--89.

\bibitem[KeSn2]{KeSn2}
\newblock J. P. Keating and N. C. Snaith, \emph{Random matrix theory and $L$-functions at $s=1/2$},
Comm. Math. Phys.  \textbf{214}  (2000),  no. 1, 91--110.

\bibitem[KeSn3]{KeSn3}
\newblock J. P. Keating and N. C. Snaith, \emph{Random
matrices and $L$-functions}, Random matrix theory, J. Phys. A
\textbf{36} (2003), no. 12, 2859--2881.


\bibitem[KonMi]{KonMi}
A. Kontorovich and S. J. Miller, \emph{Benford's Law, Values of
$L$-Functions and the $3x+1$ Problem}, Acta Arith. \textbf{120}
(2005), 269--297.

\bibitem[Knu]{Knu} \newblock D. Knuth, \emph{The Art of Computer Programming, Volume 2: Seminumerical Algorithms}, 3rd edition, Addison-Wesley, MA, 1997. 

\bibitem[LagSo]{LagSo}
J. Lagarias and K. Soundararajan, \emph{Benford's Law for the $3x+1$
Function}, J. London Math. Soc. \textbf{74} (2006), ser. 2, no. 2, 289--303.

\bibitem[Meb]{Meb}
W. Mebane, \emph{Detecting Attempted Election Theft: Vote Counts, Voting Machines and Benford's Law}, Prepared for delivery at the 2006 Annual Meeting of the Midwest Political Science Association, April 20-23, Palmer House, Chicago. \bburl{http://www.umich.edu/~wmebane/mw06.pdf}.

\bibitem[Mez]{Mez}
F. Mezzadri, \emph{How to Generate Random Matrices from the Classical Compact Groups}, Notices of the AMS \textbf{54} (2007), 592--604.

\bibitem[Mil]{Mil}
S. J. Miller (editor), \emph{Benford's Law: Theory and Applications}, Princeton University Press, 2015.



\bibitem[Mon]{Mon}
\newblock H. Montgomery, \emph{The pair correlation of zeros of the zeta
function}, Analytic Number Theory, Proc. Sympos. Pure Math.
\textbf{24}, Amer. Math. Soc., Providence, $1973$, $181-193$.

\bibitem[New]{New} \newblock S. Newcomb, \emph{Note on the frequency of use of the different digits in natural numbers}, Amer. J. Math. \textbf{4} (1881), 39-40.

\bibitem[Nig]{Nig}
M. Nigrini, \emph{Using Digital Frequencies to Detect Fraud}, The White Paper \textbf{8} (1994), no. 2, 3--6.

\bibitem[Od1]{Od1}
\newblock A. Odlyzko, \emph{On the distribution of spacings
between zeros of the zeta function}, Math. Comp. \textbf{48} (1987),
no. 177, 273--308.

\bibitem[Od2]{Od2}
\newblock A. Odlyzko, \emph{The $10^{22}$-nd zero of the Riemann zeta function}, Proc.
Conference on Dynamical, Spectral and Arithmetic Zeta-Functions, M.
van Frankenhuysen and M. L. Lapidus, eds., Amer. Math. Soc.,
Contemporary Math. series, 2001.

\bibitem[Pin]{Pin}
\newblock R. Pinkham, \emph{On the Distribution of First Significant Digits}, The Annals of Mathematical Statistics \textbf{32}, no. 4 (1961), 1223-1230.

\bibitem[Rai]{Rai}
R. A. Raimi, \emph{The first digit problem}, Amer. Math. Monthly \textbf{83} (1976), no. 7, 521--538.

\bibitem[TV1]{TV1}
T. Tao and V. Vu, \emph{From the Littlewood-Offord problem to the Circular Law: universality of the spectral distribution of random matrices}, Bull. Amer. Math. Soc. \textbf{46} (2009), 377--396.

\bibitem[TV2]{TV2}
T. Tao and V. Vu, \emph{Random matrices: universality of local eigenvalue statistics up to the edge}, Comm. Math. Phys. \textbf{298}, (2010), no. 2, 549--572

\end{thebibliography}
\end{document}